\newtheorem{rem}{Remark}[section]
\newtheorem{algorithm}{Algorithm}
\title{Preconditioning the Restarted and Shifted Block FOM
Algorithm for Matrix Exponential Computation}
\author{Gang Wu\thanks{Corresponding author (G. Wu). Department of Mathematics,
China University of Mining and Technology, Xuzhou, 221116, P. R. China.
E-mail: {\tt wugangzy@gmail.com} and {\tt gangwu76@126.com}. This author is
supported by the National Science Foundation of China under grant 11371176, the Natural Science Foundation of Jiangsu
Province under grant BK20131126, the 333 Project of Jiangsu Province, and the Talent Introduction Program of China
University of Mining and Technology.}
        \and Hong-kui Pang\thanks{School of Mathematics and Statistics, Jiangsu Normal University, Xuzhou, 221116,
        Jiangsu, P. R. China. E-mail: {\tt panghongkui@163.com}. This author is supported by the National
Science Foundation of China under grant 11201192, the
Natural Science Foundation of Jiangsu Province under grant
BK2012577, and the Natural Science Foundation for Colleges and
Universities in Jiangsu Province under grant 12KJB110004.}
\and Jiang-Li Sun
\thanks{School of
Mathematics and Statistics, Jiangsu Normal University, Xuzhou, 221116, Jiangsu, P. R. China.} E-mail: {\tt sunjiangli1108@163.com}.}
\begin{document}

\maketitle

\begin{abstract}
The approximation of $e^{tA}B$ where $A$ is a large sparse matrix and $B$ a rectangular matrix is
the key ingredient in many scientific and engineering computations. A powerful tool to manage the matrix exponential function
is to resort to a suitable rational approximation such as the Carath$\acute{\rm e}$odory-Fej$\acute{\rm e}$r approximation,
whose core reduces to solve shifted linear
systems with multiple right-hand sides. The restarted and shifted block FOM algorithm is a commonly used technique for this problem.
However, determining good preconditioners for shifted systems that preserve the original
structure is a difficult task.
In this paper, we propose a new preconditioner for the restarted and shifted block FOM algorithm.
The key is that the absolute values of the poles of
the Carath$\acute{\rm e}$odory-Fej$\acute{\rm e}$r approximation are medium sized and can be much smaller than the norm of the matrix in question.
The advantages of the proposed strategy are that we can precondition all the shifted linear systems simultaneously, and preserve the original
structure of the shifted linear systems after restarting. Theoretical results are provided to show the rationality of our preconditioning strategy.
Applications of the new approach to Toeplitz matrix exponential problem are also discussed.
Numerical experiments illustrate the superiority of the new algorithm over many state-of-the-art algorithms for matrix exponential.
\end{abstract}

\begin{keywords}
Matrix exponential, Shifted linear systems, Multiple right-hand sides, Full Orthogonalization Method (FOM), Precondition,
Carath$\acute{\rm e}$odory-Fej$\acute{\rm e}$r approximation.
\end{keywords}

\begin{AMS}
65F60, 65F10, 65F15.
\end{AMS}

\pagestyle{myheadings} \thispagestyle{plain} \markboth{G. WU, H. PANG AND J. SUN}{PRECONDITIONING SHIFTED BLOCK FOM
FOR MATRIX EXPONENTIAL}

\section{Introduction}

\setcounter{equation}{0}

The computation of exponential function of large and sparse matrices is now ubiquitous in numerical analysis
research \cite{Higham,ML2}.
In this paper, we pay special attention to numerical approximation
of the matrix exponential to a block vector
\begin{equation}\label{1}
Z(t)=e^{tA}B,
\end{equation}
where $t\in\mathbb{R}$ is a fixed constant, $tA\in
\mathbb{R}^{n\times n}$ is an $n$-by-$n$ large sparse, negative definite matrix with $\|tA\|\gg 1$, and
$B=\big[{\bf b}_{1},{\bf b}_{2},\ldots,{\bf b}_{p}\big]\in \mathbb{R}^{n\times p}$ is
a block vector with $1\leq p\ll n$. That is, the real part of the spectrum of $tA$ lies in the left half plane.
This problem is the key ingredient of the computation of exponential integrators \cite{AH,CM,HO}, time-dependent
partial differential equations \cite{GS}, the
approximation of dynamical systems \cite{Dyn}, and so on \cite{AB2,LS2,TGB}.
For notation simplicity, we denote $tA$ by $A$ from now on unless otherwise stated.


The Krylov subspace methods are widely used for this type of problem, which
work on the subspaces generated by $A$ and ${\bf b}_i~(i=1,2,\ldots,p)$.
Generally speaking, there are two classes of Krylov subspace methods for evaluating (\ref{1})
when $A$ is large and sparse \cite{PS2}. In the first class of methods, the matrix is projected into
a much smaller subspace, then the exponential is applied to the reduced matrix,
and finally the approximation is projected back to the original large space \cite{residual,DK1,DK2,EE,MN,Saad2,vH,Ye}.
In the second class of methods, the exponential function is first approximated
by simpler functions such as rational functions,
and then the action of the matrix exponential is
evaluated \cite{FKL,FS,LS,Tre1,Tre2,WW}. In this case, the core of (\ref{1}) reduces to solving the following linear
systems with multiple shifts and multiple right-hand sides
\begin{eqnarray}\label{3}
(A-\tau_{i}I)X_{i}=B, \quad i=1,2,\ldots,\nu,
\end{eqnarray}
where $\tau_{i}\in \mathbb{C}~(i=1,2,\ldots,\nu)$ are the shifts. Throughout this paper, we make the assumption that both $A$
and the shifted matrices $A-\tau_{i}I~(i=1,2,\ldots,\nu)$ are
nonsingular. Note that there are $\nu\times p$
linear systems altogether.

The use of rational functions and their
partial fraction expansion allows us to exploit and generalize known
properties of Krylov subspace methods for the solution of algebraic
linear systems \cite{Saad,Simoncini3}. For instance, the first way for solving (\ref{3}) is to evaluate the $p$ shifted
linear systems separately using some shifted Krylov subspace algorithms
\cite{Frommer,FG,GZL,Simoncini,SSX}.
In \cite{DMW}, Darnnu {\it et al.} proposed a deflated and shifted GMRES algorithm for systems with multiple shifts
and multiple right-hand sides. In
this algorithm, the $p$ shifted linear systems are solved separately, and
eigenvector information from solution of the first right-hand side is
utilized to assist the convergence of the subsequent ones. However, one
has to solve the shifted systems with an auxiliary right-hand side,
which is unfavorable for very large matrices.
The second way for solving (\ref{3}) is to calculate
the $\nu$ linear systems with multiple right-hand sides sequentially,
using some block Krylov subspace methods such as the block GMRES algorithm with deflation \cite{Morgan}.
However, one has to solve the $\nu$ linear systems with multiple right-hand sides one by one,
and this algorithm may suffer from the drawback of slow convergence \cite{Morgan}.

The third way is to use the
shifted block GMRES algorithm \cite{WWJ}. In essence, it is a
generalization of the shifted GMRES algorithm \cite{FG} to its block version.
There are some deficiencies in the shifted block GMRES algorithm for linear systems with complex shifts.
Firstly, when both $A$ and $B$ are real while the shifts
$\tau_i~(i=1,2,\ldots,\nu)$ are complex, the initial block vector for the
next cycle after restarting is also complex. This problem can arise in
projection methods for rational approximation to the matrix
exponential \cite{LS,WW}. Consequently, the expensive steps for
constructing the orthogonal basis have to be performed in complex arithmetic
after the first cycle. Secondly, the shifted block GMRES algorithm may suffer from the drawbacks of ``stagnation" and ``near breakdown" in practice \cite{WWJ}. Thirdly, one has to pick the ``seed" block
system in advance. More precisely, we have to determine $\tau_1$ in
advance, which is a difficult task if there is no
further knowledge on the shifted block linear systems {\it a priori}. Finally, in the shifted block GMRES algorithm,
only the ``seed" block system residual is minimized, whereas
no minimization property is satisfied by the residuals of the
``additional" block systems \cite{FG,WWJ}.

In order to overcome these difficulties, we first introduce a
shifted block Full Orthogonalization Method (FOM) with deflated restarting. It is a block and deflated version of the
shifted FOM algorithm due to Simoncini
\cite{Simoncini}.
However, the algorithm may
converge very slowly or even may fail to converge in practice.
A commonly used technique for accelerating
iterative solvers is preconditioning \cite{Saad,Simoncini3}.
In general, the reliability of iterative
techniques, when dealing with various applications, depends much
more on the quality of the preconditioner than on the particular
Krylov subspace methods \cite{Saad}. Therefore, how to construct an efficient preconditioner
is a crucial task for shifted linear systems with multiple right-hand sides \cite{DMW,Je,Simoncini3}.

The study of preconditioners for shifted
linear systems or sequences of linear systems is an active area, and many
technologies have been proposed when $A$ is symmetric positive
definite (SPD), complex symmetric or Hermitian; see
\cite{Benzi,Bel,Ber,Freund,Su} and the references therein.
However, when $A$ is non-Hermitian,
there are two major limitations to preconditioning the shifted linear systems, which
diminish the usefulness of precondition techniques considerably \cite{Je}. First, we have to
start with the same residual for all values of $\tau_i$, which means
that we cannot have $\tau_i$-dependent left preconditioning. Second,
preconditioning must retain the shifted structure of the matrix.
Indeed, as was pointed out
by Simoncini and Szyld \cite{Simoncini3}, standard preconditioning approaches may be effective
on each shifted system; however, they destroy the shifted structure so that the convenient
invariance property of the Krylov subspace can no longer be employed. In this
case, determining good preconditioners for shifted systems that preserve the original
structure is very important and is still an open area of research \cite{Simoncini3}.

In \cite{GZL}, Gu {\it et al.} presented a flexible preconditioner for solving the shifted
systems. This method allows one to incorporate different
preconditioners $(A-\tau_i I)^{-1}$ with different $\tau_i$, into the
Arnoldi procedure when constructing a projective subspace.
Unfortunately, one has to solve $m$ (complex) shifted systems $(A-\tau_i
I){\bf w}_i={\bf v}_i~(i=1,2\ldots,m)$ in each outer iteration of the
algorithm, where $m$ is the number of Arnoldi steps. Thus,
the expensive step of constructing the orthogonal basis can not be realized in real arithmetics even if $A$ and $B$ are real.
In particular, when direct methods are used, an additional difficulty is that for solving a sequence
of (complex) shifted systems, the matrix has to be refactorized for each shift, resulting in
considerable computational effort.
Just recently, this strategy was applied to precondition shifted linear systems arising in oscillatory hydraulic tomography (OHT) \cite{Sai}.
In \cite{ASG,Je}, some polynomial preconditioners are investigated to solve shifted linear systems together with the bi-conjugate gradient method (BiCG).
Unfortunately, for a general polynomial preconditioner, systems with different shifts may no
longer have equivalent Krylov subspaces for the shifted FOM (or GMRES) algorithm. Furthermore,
when the eigenvalues of $A$ are complex, the foci of the ellipse that encloses the spectrum has to be determined in advance \cite{ASG}.
Recently, a preconditioned and shifted (block) GMRES algorithm was proposed for accelerating the PageRank computation \cite{WWJ}. In this approach, a polynomial preconditioner
was introduced for shifted linear systems with multiple right-hand sides, under the
condition that the spectrum radius $\rho(A)<|\tau_i|,~i=1,2,\ldots,\nu$.
In many applications, however, we have that $\|A\|\gg |\tau_i|$ \cite{Dyn,Higham,LPS,P,TGB,Vorst}.
In this situation, the preconditioning strategy proposed in \cite{WWJ} does not work any more.

In this work,
we propose a new preconditioner for shifted linear systems with multiple right-hand sides when $\|A\|\gg |\tau_i|,~i=1,2,\ldots,\nu$, and present a preconditioned and shifted block FOM
algorithm with deflated restarting for the matrix exponential problem.
The motivation of our new method is threefold: First, we aim to precondition all the shifted linear system {\it simultaneously}. Second, we are interested in solving all the shifted
linear systems in the same search subspace. Third, when both $A$ and $B$ are real while the shifts are complex, the expensive step of constructing the orthogonal basis can be realized in real arithmetics.

This paper is organized as follows. In Section 2, we briefly introduce the Carath$\acute{\rm e}$ odory-Fej$\acute{\rm e}$r method for rational approximation
of exponential \cite{ST,Tre1,Tre2,Tre3}. In Section 3, we present a shifted block FOM algorithm with deflated restarting.
In Section 4, we propose a new preconditioner and provide a
preconditioned and shifted block FOM algorithm with deflation for the matrix exponential problem (\ref{3}).
Theoretical analysis is given to show the feasibility and rationality of the new approach. Numerical experiments
are reported in Section 5, illustrating that our new algorithm often outperforms many state-of-the-art algorithms for matrix exponential. Some
concluding remarks are given in Section 6.

In this paper, matrices are written as capitals such as $A$, and vectors are typeset in bold such as {\bf v}.
Let $\mathcal{K}_{m}(A,V_1)={\rm span}\{V_1,AV_1,\ldots,A^{m-1}V_1\}$ be the block Krylov subspace generated by $A$ and $V_1$, let $A^{\rm T}$ and $A^{\rm H}$ be
the transpose and conjugate transpose of $A$, respectively. Denote by $\|\cdot\|$ the vector norm or its induced matrix norm, and by $\|\cdot\|_F$ the F-norm of a matrix, respectively.
Let $I$ be the identity matrix and $O$ be a zero matrix or vector whose order
are clear from the context.
Let $E_j$ be an $mp\times p$ matix
which consists of the $\big((j-1)p+1\big)$-th to the $jp$-th ($1\leq j\leq m$) columns of the $mp\times mp$ identity matrix.
Standard MATLAB \cite{Matlab} notations are used whenever necessary.

\section{The Carath$\acute{\bf e}$odory-Fej$\acute{\bf e}$r method for rational approximation}
\setcounter{equation}{0}

A very powerful tool to manage matrix functions is to resort to a suitable rational approximation \cite{Higham,LS,Tre1,Tre2,Tre3,WW}.
Given a function $f(x)$ which is continuous on a closed interval, let $\mathcal{R}_{\mu,\nu}$ be the set of rational functions of the
form
\begin{equation}
\widehat{f}(x)=\frac{\sum_{k=0}^{\mu}a_k x^k}{\sum_{k=0}^{\nu}b_k x^k}.
\end{equation}
The well known Chebyshev approximation consists in finding an
approximation $f^{\star}(x)\in \mathcal{R}_{\mu,\nu}$, such that \cite{GP,Higham,Mein}
\begin{equation}
\|f(x)-f^{\star}(x)\|_{\infty}\leq \|f(x)-\widehat{f}(x)\|_{\infty},\quad\forall
\widehat{f}(x)\in\mathcal{R}_{\mu,\nu},
\end{equation}
where $\|f(x)-f^{\star}(x)\|_{\infty}$ stands for the infinity norm of $f(x)-f^{\star}(x)$. If $\mu=\nu$ and $f^{\star}(x)$ is the Chebyshev rational approximation, it holds
that \cite{CRV}
$$
\sup_{x\geq 0}\big|e^{-x}-f^{\star}(x)\big|\approx 10^{-\nu}.
$$
It is shown that $f^{\star}(x)$ always exists \cite{Mein}, however, its
uniqueness is not guaranteed unless $\nu=0$. Furthermore, how to
characterize the best approximation remains an issue \cite{LS,Mein}.

A practical alternative is to use the
Carath$\acute{\rm e}$odory-Fej$\acute{\rm e}$r (CF) approximation, and to work directly with rational functions designed as good approximations to $e^x$ on $\mathbb{R}^{-}$ (i.e., the left
half plane) \cite{ST,Tre1,Tre2,Tre3}.
This technique is based on the singular value decomposition of a Hankel
matrix of Chebyshev coefficients of the function
$e^x$,
transplanted from $\mathbb{R}^{-}$ to $[-1,1]$. The goal of this
method is to find a function $\widetilde{f}^{\star}(x)\in
\mathcal{\widetilde{R}}_{\mu,\nu}$ such that
\begin{equation}
\|f(x)-\widetilde{f}^{\star}(x)\|_{\infty}\leq
\|f(x)-\widetilde{f}(x)\|_{\infty},\quad\forall
\widetilde{f}(x)\in\mathcal{\widetilde{R}}_{\mu,\nu},
\end{equation}
where $\mathcal{\widetilde{R}}_{\mu,\nu}$ is the set of rational functions
of the following form \cite{GP,Tre1,Tre2,Tre3}
\begin{equation}
\widetilde{f}(x)=\frac{\sum_{k=-\infty}^{\mu}a_k x^k}{\sum_{k=0}^{\nu}b_k
x^k}.
\end{equation}
Then the Carath$\acute{\rm e}$odory-Fej$\acute{\rm e}$r approximation is obtained from dropping all terms of negative degree in the numerator of $\widetilde{f}^{\star}(x)$.

Although the Carath$\acute{\rm e}$odory-Fej$\acute{\rm e}$r approximation is in
principle only approximate, Magnus \cite{Mag} indicated that for
$\mu=\nu$, the CF approximation differs from the
best Chebyshev approximation for a factor of
$\mathcal{O}(56^{-\nu})$. Indeed, the error in exact arithmetic is below
standard machine precision for $\nu\geq 9$, and for $\nu=14$ it is
about $10^{-26}$ \cite{Mag,Tre2}.

Without loss of generality, we still denote by
$$
\widetilde{f}^{\star}(x)=\omega_0+\sum_{i=1}^\nu
\frac{\omega_i}{x-\tau_i}
$$
the Carath$\acute{\rm e}$odory-Fej$\acute{\rm e}$r approximation for approximating $e^x$ on
$\mathbb{R}^{-}$, where $\tau_i$ and $\omega_i~(i=1,2,\ldots,\nu)$ are the
poles and the residues, respectively. We refer to \cite{Tre2} for a MATLAB function to compute the
poles and the residues of the
Carath$\acute{\rm e}$odory-Fej$\acute{\rm e}$r method for the type $(\nu,
\nu)$ near best approximation.
The idea behind the Carath$\acute{\rm e}$odory-Fej$\acute{\rm e}$r approach for matrix exponential is to approximate $e^{A}B$ by using
\begin{equation}\label{11}
\widetilde{f}^{\star}(A)B=\omega_0B+\sum_{i=1}^\nu\omega_i(A-\tau_iI)^{-1}B.
\end{equation}
Thus, an approximation to $\widetilde{f}^{\star}(A)B$ can be obtained by first solving the shifted
linear systems appearing in the sum, and then by collecting the sum terms. When the shifts $\{\tau_i\}_{i=1}^\nu$ are appeared in conjugate pairs, one only needs to solve $\nu/2$ shifted linear system altogether.
\begin{rem}
In the Carath$\acute{e}$odory-Fej$\acute{e}$r approximation of the matrix exponential, we point out that the absolute
values of the shifts $\tau_i~(i=1,2,\ldots,\nu)$ are medium sized.
For instance, if we
choose $\nu=14$ in the Carath$\acute{e}$odory-Fej$\acute{e}$r approximation, then $\max_{1\leq i\leq\nu}|\tau_i|\approx18.9$.
In many applications, we have that $\|A\|\gg|\tau_i|~(i=1,2,\ldots,\nu)$ \cite{Dyn,Higham,LPS,P,TGB,Vorst}.
\end{rem}


\section{A shifted block FOM
algorithm with deflated restarting for shifted linear systems with multiple right-hand sides}

\setcounter{equation}{0}

A popular way to solve linear system with multiple
right-hand sides is the block approach
\cite{Saad,Simoncini3}.
In this section, we introduce a shifted block FOM
algorithm with the deflated restarting strategy \cite{Morgan}.
It can be viewed as a block and delated
version of the shifted FOM algorithm given by Simoncini
\cite{Simoncini}.

Suppose that $V_1$ is an $n\times p$ orthonormal matrix, then the block Arnoldi process with A. Ruhe's variant \cite{Saad} will
generate an orthnormal basis for the block Krylov subspace
$\mathcal{K}_{m+1}(A,V_1)=\mbox{span}\{V_1,AV_1,\ldots,A^{m}V_1\}$ in
exact arithmetics. 
%
The following relation holds for the block Arnoldi process \cite{Saad}
\begin{eqnarray}\label{2.1}
A\mathcal{V}_m=\mathcal{V}_{m}\mathcal{H}_{m}+V_{m+1}H_{m+1,m}E^{\rm T}_{m}=\mathcal{V}_{m+1}\widetilde{\mathcal{H}}_m,
\end{eqnarray}
where $\mathcal{V}_{m+1}=\big[V_1,V_2,\ldots,V_{m+1}\big]$ is an orthogonal
basis for $\mathcal{K}_{m+1}(A,V_1)$,
$E_m$ is an $mp\times p$ matix
which consists of the last $p$ columns of the $mp\times mp$ identity
matrix, and $\mathcal{H}_m,\widetilde{\mathcal{H}}_m$ are $mp\times
mp$ and $(m+1)p\times mp$ upper band Hessenberg matrices,
respectively, with the $p\times p$ matrices $H_{i,j}$ being their
elements.

It is known that the block Krylov subspace is
spanned by the same basis if the matrix is scaled or shifted \cite{Saad}
$$
\mbox{span}\{V_1,(A-\tau_i I)V_1,\ldots,(A-\tau_i
I)^{m}V_1\}=\mbox{span}\{V_1,AV_1,\ldots,A^{m}V_1\},~ i=1,2,\ldots,\nu.
$$
By (\ref{2.1}), the block Arnoldi relations with respect to the
shifted matrices $A-\tau_i I$ are
\begin{eqnarray}\label{233}
(A-\tau_i I)\mathcal{V}_{m}&=&
\mathcal{V}_{m}(\mathcal{H}_{m}-\tau_i I)+V_{m+1}H_{m+1,m}E^{\rm T}_{m}\nonumber\\
&=&
\mathcal
{V}_{m+1}(\widetilde{\mathcal{H}}_m-\tau_i\widetilde{I}_m),\quad i=1,2,\ldots,\nu,
\end{eqnarray}
where
$
\widetilde{I}_{m}
$
is an $(mp+p)\times mp$ matrix which is the same as the identity matrix except for $p$ zero rows at the bottom.
However, the shifted block FOM algorithm will become
impractical when $m$ is
large, because of the growth of memory requirement and computation
cost.


One remedy is to
restart the shifted block FOM algorithm.
Denote by $\widehat{X}^{0}_{i}$ the approximations
obtained from the ``previous" cycle (where we choose $\widehat{X}^{0}_{i}=O$ for the first cycle), and by
$\widehat{R}_i^0=B-(A-\tau_i I)\widehat{X}^{0}_{i}$ the corresponding
residuals. Then the shifted block FOM algorithm uses
$\widehat{X}_i^{FOM}=\widehat{X}^{0}_{i}+\mathcal{V}_m\widehat{Z}_i^{FOM}~(i=1,2,\ldots,\nu)$ as
approximate solutions to (\ref{3}) in the ``current" cycle, with the residuals
\begin{equation}\label{3.2}
\widehat{R}_i^{FOM}=B-(A-\tau_i I)\widehat{X}^{FOM}_{i}\perp
\mbox{span}\{\mathcal{V}_m\},\quad i=1,2,\ldots,\nu.
\end{equation}
Let $\widehat{R}_i^0=V_1\widehat{R}_i$ be the QR factorizations of $\widehat{R}_i^0$, then it follows
from (\ref{3.2}) that $\widehat{Z}_i^{FOM}$ can be obtained from solving the following {\it projected}
linear systems
\begin{equation}\label{3.3}
(\mathcal{H}_{m}-\tau_i
I)\widehat{Z}_i^{FOM}=E_1\widehat{R}_i,~~i=1,2,\ldots,\nu,
\end{equation}
where $E_1$ is the matrix composed of the first $p$ columns of the $mp\times mp$ identity matrix.
Note that the
residuals with respect to different shifts are included in $\mbox{span}\{V_{m+1}\}$ in the shifted block FOM algorithm.
Indeed,
we have from (\ref{233}) that
\begin{eqnarray}\label{35}
\widehat{R}^{FOM}_{i}&=&B-(A-\tau_{i}I)\widehat{X}^{FOM}_{i}=\widehat{R}^{0}_{i}
-(A-\tau_{i}I)\mathcal{V}_{m}\widehat{Z}^{FOM}_{i}\nonumber\\
&=&\mathcal{V}_{m}\big[E_1\widehat{R}_i-(\mathcal{H}_{m}-\tau_i
I)\widehat{Z}^{FOM}_{i}\big]-V_{m+1}(H_{m+1,m}E^{\rm T}_{m}\widehat{Z}^{FOM}_{i})\nonumber\\
&=&-V_{m+1}(H_{m+1,m}E^{\rm T}_{m}\widehat{Z}^{FOM}_{i}),\quad i=1,2,\ldots,\nu,
\end{eqnarray}
and
\begin{equation}\label{365}
\|\widehat{R}^{FOM}_{i}\|_F=\|H_{m+1,m}E^{\rm T}_{m}\widehat{Z}^{FOM}_{i}\|_F,\quad  i=1,2,\ldots,\nu.
\end{equation}
As a result, the residuals with respect to different shifts
are collinear in the shifted block FOM algorithm.



Now we consider how to restart the shifted block FOM algorithm in practice. In this paper, we are interested in the
deflation strategy \cite{Morgan} for linear systems, in which the approximate eigenvectors are put
firstly in the search subspace. Here ``deflation" means computing eigenvectors corresponding
to some eigenvalues, and using them to
remove these eigenvalues from the spectrum of the matrix, to speed up the convergence of the iterative algorithm. More precisely, let $k$ be a multiple of $p$ and let $\widetilde{\bf y}_1,\widetilde{\bf y}_2,\ldots,\widetilde{\bf y}_k$ be $k$ Ritz vectors \cite{GV}
computed from the ``previous" cycle. Then after restarting, the search subspace of the ``current" cycle is \cite{Morgan}
\begin{equation}\label{3.6}
\mbox{span}\{\widetilde{\bf y}_1,\widetilde{\bf y}_2,\ldots,\widetilde{\bf y}_k,V_{m+1},AV_{m+1},\ldots,A^{q}V_{m+1}\},
\end{equation}
where $q=(mp-k)/p$. The shifted block FOM algorithm with deflation generates a block Arnoldi
relation similar to (\ref{2.1}), where
$\mathcal{V}_{m}$ is an $n\times mp$ matrix whose columns span the
subspace (\ref{3.6}), and $\widetilde{\mathcal{H}}_{m}$ is an $(mp+p)\times mp$ matrix that is band upper-Hessenberg except for a full $(k+p)\times k$ leading portion $\widetilde{\mathcal{H}}_k^{new}$. A part of
this recurrence can be separated out to give
$A\mathcal{V}_k^{new}=\mathcal{V}_{k+p}^{new}\widetilde{\mathcal{H}}_k^{new}$,
where $\mathcal{V}_{k+p}^{new}$ is an $n\times (k+p)$ matrix whose
columns span the subspace of Ritz vectors and $V_{m+1}$, and
$\mathcal{V}_k^{new}$ consists of the first $k$ columns of
$\mathcal{V}_{k+p}^{new}$; for more details, refer to \cite{JW,Morgan}.
Applying this deflation strategy to the shifted block FOM algorithm introduced above, we can present the following algorithm.

\begin{algorithm}
{\rm A shifted block FOM algorithm with deflation for shifted linear systems with multiple right-hand sides}~~{\bf (SBFOM-DR)}\\
{\bf 1.} Input: Given the block Arnoldi steps $m$, a prescribed
tolerance tol, and $k$ {\rm(}which is a multiple of $p${\rm)}, the number of approximate eigenvectors
retained from the previous cycle, and set all the initial guess
$\widehat{X}^{0}_{i}=O,~i=1,2,\ldots,\nu$; \\
{\bf 2.}~~Compute the QR decomposition: $\widehat{R}^{0}_{1}=V_{1}\widehat{R}_1$;\\
{\bf 3.}~~Use $V_{1}$ as the initial block vector, and run the block Arnoldi process {\rm(}with A. Ruhe's variant{\rm)} for the computation of $\mathcal{V}_{m}$ and
$\widetilde{\mathcal{H}}_{m}$;\\
{\bf 4.}~~Solve the projected shifted linear systems.
If all the residual norms are below the prescribed tolerance tol, see {\rm(}\ref{365}{\rm)}, then Stop, else Continue;\\
{\bf 5.}~~Compute all the eigenpairs
$(\widehat{\theta}_{i},{\bf y}_{i})~(i=1,2,\ldots,mp)$ of
$\mathcal{H}_{m}$, and select $k$ smallest of them as
the desired ones;\\
{\bf 6.}~~Orthonormalization of the first $k$ short vectors:
Orthonormalize the $\{{\bf y}_{i}\}$'s, first separating them into
real parts and imaginary parts if they are complex, in order to form
an $mp\times k$ matrix
$P_{k}=[{\bf y}_{1},{\bf y}_{2},\ldots,{\bf y}_{k}]$. Both parts of
complex vectors need to be included, so $k$ may be increased or decreased if necessary;\\
{\bf 7.}~~Orthonormalization of the $k+p$ short vectors: Extend
$P_{k}$ to an $(mp+p)\times k$ matrix $\widehat{P}_{k}$, by appending a
$p\times k$ zero matrix at the bottom, and set $P_{k
+p}=[\widehat{P}_{k},~\widehat{E}]$, where $\widehat{E}$ is an $(mp+p)\times
p$ zeros matrix except for its last $p$ rows being an identity
matrix. Note that $P_{k +p}$ is an
$(mp+p)\times(k +p)$ matrix;\\
{\bf 8.}~~Form portions of new $\widetilde{\mathcal{H}}_{mp}$ and
$\mathcal{V}_{m}$ using the old $\widetilde{\mathcal{H}}_{mp}$ and
$\mathcal{V}_{m+1}$: Let $\widetilde{\mathcal{H}}^{new}_{k}=P^{\rm T}_{k
+p}\widetilde{\mathcal{H}}_{mp}P_{k}$, and $\mathcal{V}^{new}_{k
+p}=\mathcal{V}_{m+1}P_{k+p}$;\\
{\bf 9.}~~Block Arnoldi iteration: Apply the block Arnoldi iteration
from the current point to form the rest portions of
$\mathcal{V}_{m+1}$ and $\widetilde{\mathcal{H}}_{mp}$, where the
current point is $\mathcal{V}^{new}_{k +p}$, and goto {\bf Step 4}.
\end{algorithm}

Algorithm 1 is very attractive when both
$A$ and $B$ are real while the shifts $\{\tau_i\}$'s are complex. Indeed,
at each cycle after restarting, all the complex residuals are
collinear to the $(m+1)$-st real basis vector $V_{m+1}$, and the expensive step for constructing the orthogonal
basis $\mathcal{V}_{m+1}$ can be performed in real arithmetics.

\section{Preconditioning the restarted and shifted block FOM algorithm}
\setcounter{equation}{0}

In this section, we propose a new preconditioner for shifted linear systems with multiple right-hand sides when $\|A\|\gg |\tau_i|~(i=1,2,\ldots,\nu)$, and apply it
to the computation of the matrix exponential problem. Some theoretical results are established to show the rationality and feasibility of our new preconditioning strategy.


\subsection{The preconditioner and the algorithm}
In many applications, we have that $\|A\|\gg \max_{1\leq i\leq\nu}|\tau_i|$ \cite{Dyn,Higham,LPS,P,TGB,Vorst}.
For instance, if $A$ is the usual finite difference/element discretization of a Poisson operator with Dirichlet boundary conditions,
the norm of $A$ grows rapidly with respect to the number $n$ of spatial grid points \cite{QV}.
Indeed, if $\|A\|>|\tau_i|$, then
$$
\frac{\|(A-\tau_{i}I)-A\|}{\|A-\tau_{i}I\|}\leq\frac{|\tau_{i}|}{\|A\|-|\tau_{i}|},\quad i=1,2,\ldots,\nu.
$$
Specifically, if $\|A\|\gg|\tau_i|$, we have that
\begin{equation}\label{43}
\frac{\|(A-\tau_{i}I)-A\|}{\|A-\tau_{i}I\|}\lessapprox\frac{|\tau_{i}|}{\|A\|}\ll 1,\quad i=1,2,\ldots,\nu.
\end{equation}
Thus, the idea is to use
\begin{equation}
\widetilde{M}_{i}=A^{-1},\quad i=1,2,\ldots,\nu,
\end{equation}
as right-preconditioner to (\ref{3}). In other words, the matrix $A$ can be viewed as an approximation to $A-\tau_iI$ if $\|A\|\gg |\tau_i|~(i=1,2,\ldots,\nu)$,
and our strategy can be understood as an approximate inverse preconditioning or a shift-invert technique where the shift is set to be zero.
Recall from Remark 2.1 that the assumption of $\|A\|\gg \max_{1\leq i\leq\nu}|\tau_i|$ is mild for the
Carath$\acute{\rm e}$odory-Fej$\acute{\rm e}$r approximation of the matrix exponential.
Moreover, it is seen that
\begin{equation}
\|I-(A-\tau_i I)A^{-1}\|=\|\tau_iA^{-1}\|=\frac{|\tau_i|}{\|A\|}\cdot\kappa(A),\quad i=1,2,\ldots,\nu,
\end{equation}
where $\kappa(A)=\|A\|\cdot\|A^{-1}\|$ is the condition number of $A$. Therefore,
$A^{-1}$ will be a good preconditioner to the shifted linear systems
if $\max_{1\leq i\leq\nu}\frac{|\tau_i|}{\|A\|}$
is sufficiently small and $A$ is not too ill-conditioned.
Furthermore, the larger $\|A\|$ is, the better the preconditioner will be.
Indeed, using $A^{-1}$ as the preconditioner may
be advantageous with respect to $(A-\tau_i I)^{-1}$ if either of the following
conditions apply {\rm\cite{Simoncini4}}:
First, solving with $A-\tau_i I$ is more expensive or difficult than solving with $A$ {\rm(}e.g., $A$ is large
and real while the $\{\tau_i\}$'s are complex{\rm)}.
Second, the number of shifts involved is large, and the dimension of the search space
is significantly low. Both conditions can be satisfied in many real applications {\rm\cite{DK2,Simoncini4}}.

Therefore, the preconditioned linear systems turn out to be the following form
$$
(A-\tau_iI)\widetilde{M}_{i}{Y}_{i}=B,\quad i=1,2,\ldots,\nu,
$$
that is,
\begin{equation}\label{3188}
\big(I-\tau_{i}
A^{-1}\big){Y}_{i}=B,\quad i=1,2,\ldots,\nu,
\end{equation}
with
${X}_{i}=\widetilde{M}_{i}{Y}_{i}=A^{-1}{Y}_{i}~(i=1,2,\ldots,\nu)$
being the desired solutions.

\begin{rem}
It is known that clustered eigenvalues are favorable for convergence of Krylov subspace methods \cite{Saad}.
Let $\lambda_j$ be eigenvalues of $A$, then the eigenvalues of $(I-\tau_{i}A^{-1})$ are mapped to be $1-\tau_i/\lambda_j$. If the smallest eigenvalues of $A$ are deflated using the technique described in Section 3, then the eigenvalues of $(I-\tau_{i}A^{-1})$ will be clustered around 1 as $|\lambda_j|\gg|\tau_i|$. This gives the rationality of knitting the deflated restarting technique together with our preconditioning strategy.
\end{rem}


We notice that the block Krylov subspace with
respect to $A^{-1}$ and those
with respect to $(A-\tau_{i}I)\widetilde{M}_i
~(i=1,2,\ldots,\nu)$ are identical. Moreover,
suppose that the
block Arnoldi relation with respect to $A^{-1}$ is
\begin{eqnarray}\label{4.5}
A^{-1}\mathcal{V}_{m}=\mathcal{V}_{m}\mathcal{H}_{m}+V_{m+1}H_{m+1,m}E^{\rm T}_{m}=\mathcal{V}_{m+1}\widetilde{\mathcal{H}}_m.
\end{eqnarray}
Recall that both $\mathcal{V}_{m+1}$ and $\widetilde{\mathcal{H}}_m$ are different from those in (\ref{2.1}).
Then, the block Arnoldi relations with respect to the shift systems (\ref{3188})
are
\begin{equation}\label{4.6}
\big(I-\tau_{i}
A^{-1}\big)\mathcal{V}_{m}=\mathcal{V}_{m+1}\big(\widetilde{I}_{m}-\tau_{i}\widetilde{\mathcal{H}}_m\big),\quad i=1,2,\ldots,\nu.
\end{equation}
So we can precondition
all the shifted linear systems simultaneously, and solve them
in the same search subspace in the preconditioned and shifted block FOM algorithm.

Denote by $\widehat{Y}^{0}_{i}$ the approximations obtained from the
``previous" cycle of the preconditioned and shifted block FOM algorithm, and by
$\widehat{R}_i^0=B-(I-\tau_iA^{-1})\widehat{Y}^{0}_{i}~(i=1,2,\ldots,\nu)$ the corresponding
residuals. Let $\widehat{R}_i^0=V_1\widehat{R}_i$ be the QR
factorization,
then the preconditioned and shifted block FOM algorithm uses
$\widehat{Y}_i^{FOM}=\widehat{Y}^{0}_{i}+\mathcal{V}_m\widehat{Z}_i^{FOM}$ as
approximate solutions to (\ref{3188}), where $\widehat{Z}_i^{FOM}$ are the solutions of the following {\it projected} linear systems
\begin{equation}\label{3236}
(I-\tau_i\mathcal{H}_{m})\widehat{Z}_i^{FOM}=E_1\widehat{R}_i,\quad i=1,2,\ldots,\nu.
\end{equation}
The residuals are
\begin{eqnarray}
\widehat{R}^{FOM}_{i}&=&B-(I-\tau_{i}A^{-1})(\widehat{Y}^{0}_{i}+\mathcal{V}_m\widehat{Z}_i^{FOM})\nonumber\\
&=&\tau_iV_{m+1}(H_{m+1,m}E^{\rm T}_{m}\widehat{Z}^{FOM}_{i}),\quad i=1,2,\ldots,\nu,
\end{eqnarray}
and
\begin{equation}\label{49}
\|\widehat{R}^{FOM}_{i}\|_F=|\tau_i|\cdot\|H_{m+1,m}E^{\rm T}_{m}\widehat{Z}^{FOM}_{i}\|_F,\quad i=1,2,\ldots,\nu.
\end{equation}

We are ready to propose the main algorithm of this paper for shifted linear systems
with multiple right-hand sides as $\|A\|\gg |\tau_i|,~i=1,2,\ldots,\nu$.

\begin{algorithm}
{\rm ~A preconditioned and shifted block FOM algorithm with deflation for shifted linear systems with multiple right-hand sides}~{\bf
(PSBFOM-DR)}\\
Steps {\bf 1}--{\bf 2} are the same as those in Algorithm 1;\\
{\bf 3.}~~Run the block Arnoldi process {\rm(}with A. Ruhe's variant{\rm)} for the computation of {\rm(}\ref{4.6}{\rm)};\\
{\bf 4.}~~Solving the projected shifted linear systems.
If the residual norms are below the prescribed tolerance tol, see {\rm(\ref{49}\rm}), then Stop and form
the numerical solutions $\widetilde{X}_i~(i=1,2,\ldots,\nu)$,                                                                                                                  else continue;\\
{\bf 5.}~~Steps {\bf 5}--{\bf 9} are similar to those in Algorithm 1, except for the block Krylov subspace is generated by using $A^{-1}$.
\end{algorithm}

In each cycle, some linear
systems with respect to $A$ must be solved in the block Arnoldi process to expand the basis. More precisely, $mp$ for the first cycle and $mp-k$ for the cycles after the first. This
makes the
generation of the search space more expensive than the unpreconditioned one.
In practical calculations, if $A$ is of medium sized (say, $n\leq 10,000$), we can perform the sparse LU factorization of $A$ for the inverse.
Moreover,
the LU factorization requires to be performed once and for all, and the $L$ and $U$ factors can be stored for later use.
Specifically, when $A$ has some special structure such as
Toeplitz, one can use the Gohberg--Semencul formula \cite{GSF} for the matrix-vector products with respect to $A^{-1}$, with no need to form and store the inverse explicitly; see
\cite{LPS,PS} for more details.

On the other hand,
if the matrix is so
large that direct methods are prohibitive, using an iterative solver is definitely possible and advisable \cite{Saad,Simoncini3}.
Another alternative is to use some inexact Krylov subspace algorithms or the flexible preconditioning strategy in which the products with respect to $A^{-1}$ can be computed inaccurately as the outer
iteration
converges \cite{BF1,Simoncini2}.
In \cite{Sai}, an inexact preconditioning was considered for solving the shifted linear systems. One can naturally extend the idea of inexact preconditioning to
systems with multiple shifts and multiple right-hand sides using block and deflation techniques. We shall not pursue this issue here.

\subsection{Theoretical analysis}

In this subsection, we first establish a relationship between the approximations obtained from the shifted block FOM
method and the shifted block GMRES method.
We then show the importance of the convergence of Ritz pairs to the new algorithm.
Finally, we give an error analysis on the approximate solution obtained from solving (\ref{11}) when the inverse of $A$
is computed inaccurately.

As the shifted block GMRES algorithm and the shifted block FOM algorithm are two popular approaches for shifted linear systems with multiple
right-hand sides, we are interested in the relation between the approximations of these two methods.
Denote by $\widehat{Y}^{0}_{i}$ the
initial guesses, and by
$\widehat{R}_i^0=B-(I-\tau_iA^{-1})\widehat{Y}^{0}_{i}$ the
residuals, $i=1,2,\ldots,\nu$. Let $\widehat{R}_i^0=V_1\widehat{R}_i$ be the QR
factorization of $\widehat{R}_i^0$.
Then the preconditioned and shifted block FOM algorithm uses
$\widehat{Y}_i^{FOM}=\widehat{Y}^{0}_{i}+\mathcal{V}_m\widehat{Z}_i^{FOM}$ as
approximate solutions to (\ref{3188}), where
\begin{equation}\label{323}
\widehat{Z}_i^{FOM}=(I-\tau_i\mathcal{H}_{m})^{-1}E_1\widehat{R}_i,\quad i=1,2,\ldots,\nu.
\end{equation}
Next we consider the solutions derived from the preconditoned and shifted block GMRES algorithm \cite{WWJ}.
Without loss of generality, let $\big(I-\tau_{1}
A^{-1}\big){Y}_{1}=B$ be the ``seed" block linear system, and let $\big(I-\tau_{i}
A^{-1}\big){Y}_{i}=B~(i=2,3,\ldots,\nu)$ be the ``additional" block linear systems \cite{WWJ}. In the preconditioned and shifted block GMRES algorithm, only the ``seed" block system residual is minimized, whereas
the residuals of the
``additional" block systems are forced to be collinear with that of the ``seed" block system
after restarting \cite{WWJ}. More precisely, for the ``seed" block system, the preconditioned and shifted block GMRES algorithm uses
$\widehat{Y}_1^{GMRES}=\widehat{Y}^{0}_{1}+\mathcal{V}_m\widehat{Z}_1^{GMRES}$ as an
approximate solution to the ``seed" system, where
$$
\widehat{Z}_1^{GMRES}={\mbox {\rm arg}\min}_{Z\in\mathbb{C}^{mp\times p}}\|E_1\widehat{R}_1-\big(\widetilde{I}_{m}-\tau_{1}\widetilde{\mathcal{H}}_m\big)Z\|_F,
$$
or equivalently,
$$
\big(\widetilde{I}_{m}-\tau_{1}\widetilde{\mathcal{H}}_m\big)^{\rm H}\big(\widetilde{I}_{m}-\tau_{1}\widetilde{\mathcal{H}}_m\big)\widehat{Z}_1^{GMRES}
=\big(\widetilde{I}_{m}-\tau_{1}\widetilde{\mathcal{H}}_m\big)^{\rm H}E_1\widehat{R}_1.
$$
If $I-\tau_{1}\mathcal{H}_m$ is nonsingular, the above equation can be reformulated as
\begin{equation}\label{eqn410}
\big[(I-\tau_{1}\mathcal{H}_m)+|\tau_1|^2(I-\tau_{1}\mathcal{H}_m)^{\rm -H}E_mH_{m+1,m}^{\rm T}H_{m+1,m}E_m^{\rm T}\big]\widehat{Z}_1^{GMRES}=E_1\widehat{R}_1.
\end{equation}
Denote $\Gamma_1=|\tau_1|^2(I-\tau_{1}\mathcal{H}_m)^{\rm -H}E_m$, $\Gamma_2^{\rm T}=H_{m+1,m}^{\rm T}H_{m+1,m}E_m^{\rm T}$, and
\begin{equation}
\Omega_1=(I-\tau_{1}\mathcal{H}_m)^{-1}
\Gamma_1\big(I+\Gamma_2^{\rm T}(I-\tau_{1}\mathcal{H}_m)^{-1}\Gamma_1\big)^{-1}\Gamma_2^{\rm T},
\end{equation}
by the Sherman-Morrison-Woodbury formula \cite{GV}, we have that
\begin{eqnarray}\label{eq411}
\widehat{Z}_1^{GMRES}&=&(I-\tau_1\mathcal{H}_{m})^{-1}E_1\widehat{R}_1-\Omega_1(I-\tau_1\mathcal{H}_{m})^{-1}E_1\widehat{R}_1\nonumber\\
&=&\widehat{Z}_1^{FOM}-\Omega_1\widehat{Z}_1^{FOM}=(I-\Omega_1)\widehat{Z}_1^{FOM}.
\end{eqnarray}

Next we consider the approximations of the ``additional" block systems computed by the preconditioned and shifted block GMRES algorithm. Let  $\widehat{Y}_i^{GMRES}=\widehat{Y}^{0}_{i}+\mathcal{V}_m\widehat{Z}_i^{GMRES}~(i=2,3,\ldots,\nu)$ be the approximate solutions, and let $\widehat{R}_1^{GMRES}$
be the residual of the ``seed" system. Then the residuals of the ``additional" systems can be expressed as \cite{WWJ}
\begin{eqnarray}
\widehat{R}^{GMRES}_{i}=\widehat{R}^{GMRES}_{1}W_{i},\qquad i=2,3,\ldots, \nu,
\end{eqnarray}
where $W_{i}$ are some $p\times p$ nonsingular matrices. It follows that
$$
E_1\widehat{R}_i-(\widetilde{I}_{m}-\tau_{i}\widetilde{\mathcal{H}}_m)\widehat{Z}_i^{GMRES}=
\big[E_1\widehat{R}_1-(\widetilde{I}_{m}-\tau_{1}\widetilde{\mathcal{H}}_m)\widehat{Z}_1^{GMRES}\big]W_i,\quad i=2,3,\ldots, \nu.
$$
Denote $G_1^{GMRES}=E_1\widehat{R}_1-(\widetilde{I}_{m}-\tau_{1}\widetilde{\mathcal{H}}_m)\widehat{Z}_1^{GMRES}$, the above relation can be rewritten as
\begin{eqnarray*}
\left[\begin{array}{cc} \widetilde{I}_{m}-\tau_{i}\widetilde{\mathcal{H}}_m& G_1^{GMRES}
\end{array}\right]
\left[\begin{array}{c}
\widehat{Z}_i^{GMRES}\\W_{i}
\end{array}\right]
=E_{1}\widehat{R}_i,\qquad i=2,3,\ldots,\nu.
\end{eqnarray*}
Let $\Psi_1=G_1^{GMRES}(1:mp,:),\Psi_2=G_1^{GMRES}(mp+1:mp+p,:)$ be the $mp\times p$ and $p\times p$ matrices composed of the first $mp$ rows and the last
$p$ rows of $G_1^{GMRES}$, respectively. Denote by $\Phi_i=[\widetilde{I}_{m}-\tau_{i}\widetilde{\mathcal{H}}_m,~G_1^{GMRES}]^{-1}(1:mp,1:mp)$ the matrix composed of the first $mp$ rows and $mp$ columns of the inverse of $[\widetilde{I}_{m}-\tau_{i}\widetilde{\mathcal{H}}_m,~G_1^{GMRES}]$. If $\Psi_2$ is nonsingular, then
$$
\Phi_i=\big((I-\tau_{i}\mathcal{H}_m)+\tau_i\Psi_1\Psi_2^{-1}H_{m+1,m}E_m^{\rm T}\big)^{-1},\quad i=2,\ldots,\nu.
$$
If $I-\tau_{i}\mathcal{H}_m$ is nonsingular, denote $\Psi_3^{\rm T}=\Psi_2^{-1}H_{m+1,m}E_m^{\rm T}$ and
\begin{equation}
\Omega_i=\tau_i(I-\tau_{i}\mathcal{H}_m)^{-1}
\Psi_1\big(I+\tau_i\Psi_3^{\rm T}(I-\tau_{i}\mathcal{H}_m)^{-1}\Psi_1\big)^{-1}\Psi_3^{\rm T},\quad i=2,\ldots,\nu,
\end{equation}
from the Sherman-Morrison-Woodbury formula \cite{GV}, we obtain
\begin{eqnarray}\label{eq414}
\widehat{Z}_i^{GMRES}&=&\big((I-\tau_{i}\mathcal{H}_m)+\tau_i\Psi_1\Psi_2^{-1}H_{m+1,m}E_m^{\rm T}\big)^{-1}E_1\widehat{R}_i\nonumber\\
&=&\widehat{Z}_i^{FOM}-\Omega_i\widehat{Z}_i^{FOM}=(I-\Omega_i)\widehat{Z}_i^{FOM},~ i=2,3,\ldots,\nu.
\end{eqnarray}
In conclusion, we have the following theorem.
\begin{theorem}
Denote by $\widehat{Y}^{0}_{i}$ the initial guesses of the preconditioned and shifted block FOM and GMRES algorithms, and let
$\widehat{Y}_i^{FOM}=\widehat{Y}^{0}_{i}+\mathcal{V}_m\widehat{Z}_i^{FOM}$,
$\widehat{Y}_i^{GMRES}=\widehat{Y}^{0}_{i}+\mathcal{V}_m\widehat{Z}_i^{GMRES}~(i=1,2,\ldots,\nu)$ be the approximate solutions obtained from the two algorithms. Then under the above assumptions and notations, $\widehat{Z}_1^{FOM}$ and $\widehat{Z}_1^{GMRES}$ satisfy {\rm (\ref{eq411})}, and $\widehat{Z}_i^{FOM}$ and $\widehat{Z}_i^{GMRES}$ satisfy
{\rm(\ref{eq414})}, $i=2,\ldots,\nu$, respectively.
\end{theorem}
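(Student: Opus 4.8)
The plan is to handle the ``seed'' system ($i=1$) and the ``additional'' systems ($i=2,\ldots,\nu$) separately. In each case I would reduce the underlying least-squares problem of the preconditioned and shifted block GMRES algorithm to a \emph{square} linear system whose coefficient matrix differs from that of the corresponding preconditioned and shifted block FOM system (\ref{3236}) by a perturbation of rank at most $p$, and then invoke the Sherman--Morrison--Woodbury formula.

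For the seed system I would start from the normal equations for $\widehat{Z}_1^{GMRES}$, split $\widetilde{I}_m-\tau_1\widetilde{\mathcal{H}}_m$ into its leading $mp\times mp$ block $I-\tau_1\mathcal{H}_m$ and its trailing $p\times mp$ block $-\tau_1 H_{m+1,m}E_m^{\rm T}$ via the block Arnoldi relation (\ref{4.6}), and multiply out. The cross terms vanish because $E_m^{\rm T}E_1=O$ (the first and last block columns of the identity are disjoint), so the normal equations collapse to $\big[(I-\tau_1\mathcal{H}_m)^{\rm H}(I-\tau_1\mathcal{H}_m)+|\tau_1|^2 E_m H_{m+1,m}^{\rm T}H_{m+1,m}E_m^{\rm T}\big]\widehat{Z}_1^{GMRES}=(I-\tau_1\mathcal{H}_m)^{\rm H}E_1\widehat{R}_1$. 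Left-multiplying by $(I-\tau_1\mathcal{H}_m)^{\rm -H}$, which is legitimate under the standing assumption that $I-\tau_1\mathcal{H}_m$ is nonsingular, yields exactly (\ref{eqn410}). The coefficient matrix there is $I-\tau_1\mathcal{H}_m$ plus the rank-at-most-$p$ term $\Gamma_1\Gamma_2^{\rm T}$, so Sherman--Morrison--Woodbury gives its inverse as $(I-\tau_1\mathcal{H}_m)^{-1}-\Omega_1(I-\tau_1\mathcal{H}_m)^{-1}$; applying this to $E_1\widehat{R}_1$ and recognizing $(I-\tau_1\mathcal{H}_m)^{-1}E_1\widehat{R}_1=\widehat{Z}_1^{FOM}$ from (\ref{323}) produces (\ref{eq411}).

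For the additional systems I would use the residual-collinearity property $\widehat{R}_i^{GMRES}=\widehat{R}_1^{GMRES}W_i$ that defines the preconditioned and shifted block GMRES iterates. Substituting the block Arnoldi relations (\ref{4.6}) and the definition of $G_1^{GMRES}$ turns this into the augmented square system in the unknowns $\widehat{Z}_i^{GMRES}$ and $W_i$ displayed just before $\Psi_1,\Psi_2$ are introduced. Eliminating $W_i$ by block Gaussian elimination---permissible once $\Psi_2$ is nonsingular---shows that $\widehat{Z}_i^{GMRES}$ solves $\big((I-\tau_i\mathcal{H}_m)+\tau_i\Psi_1\Psi_2^{-1}H_{m+1,m}E_m^{\rm T}\big)\widehat{Z}_i^{GMRES}=E_1\widehat{R}_i$, again using $E_m^{\rm T}E_1=O$ to discard the spurious term on the right. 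This coefficient matrix is once more a rank-at-most-$p$ update of $I-\tau_i\mathcal{H}_m$, so Sherman--Morrison--Woodbury with the factors $\Psi_1$ and $\Psi_3^{\rm T}$ delivers (\ref{eq414}), using $\widehat{Z}_i^{FOM}=(I-\tau_i\mathcal{H}_m)^{-1}E_1\widehat{R}_i$.

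I expect the main obstacle to be bookkeeping rather than anything deep: carefully tracking which blocks of $\widetilde{\mathcal{H}}_m$ and of $G_1^{GMRES}$ survive the multiplications, verifying the block-disjointness identity $E_m^{\rm T}E_1=O$ and its consequences, and checking that the Sherman--Morrison--Woodbury capacitance matrices $I+\Gamma_2^{\rm T}(I-\tau_1\mathcal{H}_m)^{-1}\Gamma_1$ and $I+\tau_i\Psi_3^{\rm T}(I-\tau_i\mathcal{H}_m)^{-1}\Psi_1$ are invertible so that $\Omega_1$ and $\Omega_i$ are well defined. The nonsingularity of $\Psi_2$ (equivalently, that the seed residual block retains full column rank) and of $I-\tau_i\mathcal{H}_m$ are precisely the hypotheses already imposed in the text, so modulo those standing assumptions the argument is a chain of routine linear-algebra manipulations.
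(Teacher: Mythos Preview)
Your proposal is correct and follows essentially the same route as the paper: the derivations of (\ref{eq411}) and (\ref{eq414}) preceding the theorem statement proceed exactly by writing the normal equations for the seed system, reducing them to (\ref{eqn410}), and then applying Sherman--Morrison--Woodbury, and for the additional systems by passing from the collinearity condition to the augmented square system, eliminating $W_i$ (the paper phrases this as extracting the $(1,1)$ block of the inverse rather than as block Gaussian elimination, but the computation is identical), and again invoking Sherman--Morrison--Woodbury. Your invocation of $E_m^{\rm T}E_1=O$ is slightly misplaced---the simplifications you need come directly from the block structure of $\widetilde{I}_m-\tau_i\widetilde{\mathcal{H}}_m$ and the fact that the last $p$ entries of the extended $E_1$ vanish---but this is cosmetic and does not affect the argument.
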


Next, we show importance of the accuracy of Ritz
pairs for the convergence of the preconditioned and shifted block FOM algorithm.
The following theorem establishes a relation between the residuals of the shifted block linear systems
and those of the Ritz pairs.
\begin{theorem}
Let
$\widehat{R}_i^{FOM}=\tau_iV_{m+1}H_{m+1,m}(E_m^{\rm T}\widehat{Z}_i^{FOM}),~i=1,2,\ldots,\nu$,
be the residuals of the preconditioned and shifted block FOM algorithm. Denote by $(\mu_j,\mathcal{V}_m{\bf y}_j)$ the Ritz pairs of $A^{-1}$, and by ${\bf r}_j=A^{-1}\mathcal{V}_m{\bf y}_j-\mu_j\mathcal{V}_m{\bf y}_j~(j=1,2,\ldots,mp)$ the residuals of the Ritz pairs.
If $\mathcal{H}_m=P\Lambda P^{-1}$ is the spectrum decomposition of $\mathcal{H}_m$, where $P=[{\bf y}_1,{\bf y}_2,\ldots,{\bf y}_{mp}]$ and $\Lambda={\mbox diag}(\mu_1,\mu_2,\ldots,\mu_{mp})$ is diagonal, then
\begin{equation}
\widehat{R}_i^{FOM}=\tau_i\big[{\bf r}_1,{\bf r}_2,\ldots,{\bf r}_{mp}\big]\big(P(I-\tau_i\Lambda)\big)^{-1}\big(E_1\widehat{R}_i\big),\quad i=1,2,\ldots,\nu.
\end{equation}
\end{theorem}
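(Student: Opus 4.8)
The plan is to route everything through the single quantity $V_{m+1}H_{m+1,m}E_m^{\rm T}$: I will first express the Ritz residuals $\{{\bf r}_j\}$ in terms of it, then express the block FOM residuals $\widehat{R}_i^{FOM}$ in terms of it as well, and finally bridge the two by diagonalizing the projected shifted matrix $I-\tau_i\mathcal{H}_m$ via the spectral decomposition $\mathcal{H}_m=P\Lambda P^{-1}$.

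First I would make the Ritz residuals explicit. Post-multiplying the block Arnoldi relation (\ref{4.5}) by ${\bf y}_j$ and invoking the eigen-equation $\mathcal{H}_m{\bf y}_j=\mu_j{\bf y}_j$ gives $A^{-1}\mathcal{V}_m{\bf y}_j=\mu_j\mathcal{V}_m{\bf y}_j+V_{m+1}H_{m+1,m}E_m^{\rm T}{\bf y}_j$, so that ${\bf r}_j=A^{-1}\mathcal{V}_m{\bf y}_j-\mu_j\mathcal{V}_m{\bf y}_j=V_{m+1}H_{m+1,m}E_m^{\rm T}{\bf y}_j$ for $j=1,2,\ldots,mp$. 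Collecting these identities columnwise and using $P=[{\bf y}_1,{\bf y}_2,\ldots,{\bf y}_{mp}]$ yields the compact form $[{\bf r}_1,{\bf r}_2,\ldots,{\bf r}_{mp}]=V_{m+1}H_{m+1,m}E_m^{\rm T}P$.

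Next I would rewrite the FOM residual. Substituting the definition (\ref{323}) of $\widehat{Z}_i^{FOM}$ into the residual formula of the theorem statement gives $\widehat{R}_i^{FOM}=\tau_iV_{m+1}H_{m+1,m}E_m^{\rm T}(I-\tau_i\mathcal{H}_m)^{-1}E_1\widehat{R}_i$. From $\mathcal{H}_m=P\Lambda P^{-1}$ one has $I-\tau_i\mathcal{H}_m=P(I-\tau_i\Lambda)P^{-1}$, hence $(I-\tau_i\mathcal{H}_m)^{-1}=P(I-\tau_i\Lambda)^{-1}P^{-1}$; here $I-\tau_i\Lambda$ is invertible precisely because the projected shifted systems $I-\tau_i\mathcal{H}_m$ are nonsingular (equivalently $1-\tau_i\mu_j\neq0$ for all $j$), which is the standing hypothesis underlying the shifted block FOM iteration. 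Substituting, absorbing only the left factor $P$ into the Ritz-residual matrix of the first step, and using $(I-\tau_i\Lambda)^{-1}P^{-1}=\big(P(I-\tau_i\Lambda)\big)^{-1}$, I obtain $\widehat{R}_i^{FOM}=\tau_iV_{m+1}H_{m+1,m}E_m^{\rm T}P\,(I-\tau_i\Lambda)^{-1}P^{-1}E_1\widehat{R}_i=\tau_i[{\bf r}_1,{\bf r}_2,\ldots,{\bf r}_{mp}]\big(P(I-\tau_i\Lambda)\big)^{-1}(E_1\widehat{R}_i)$ for $i=1,2,\ldots,\nu$, which is the asserted identity.

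There is no genuinely deep step; once (\ref{4.5}), (\ref{323}) and the diagonalization are in hand, the result is a direct computation. The only things requiring care are purely bookkeeping: keeping the noncommuting matrix factors in the correct order — in particular resisting the temptation to ``simplify'' $P(I-\tau_i\Lambda)^{-1}P^{-1}$ to $\big(P(I-\tau_i\Lambda)\big)^{-1}$, which is false, and instead peeling off only the left $P$ to merge with $[{\bf r}_1,\ldots,{\bf r}_{mp}]$ — and verifying the invertibility of $I-\tau_i\Lambda$ so that the expression $\big(P(I-\tau_i\Lambda)\big)^{-1}$ is meaningful.
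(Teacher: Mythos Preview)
Your proof is correct and follows essentially the same route as the paper: express the Ritz residuals via the Arnoldi relation as $[{\bf r}_1,\ldots,{\bf r}_{mp}]=V_{m+1}H_{m+1,m}E_m^{\rm T}P$, substitute $\widehat{Z}_i^{FOM}=(I-\tau_i\mathcal{H}_m)^{-1}E_1\widehat{R}_i$ into the residual formula, diagonalize $(I-\tau_i\mathcal{H}_m)^{-1}=P(I-\tau_i\Lambda)^{-1}P^{-1}$, and regroup using $(I-\tau_i\Lambda)^{-1}P^{-1}=\big(P(I-\tau_i\Lambda)\big)^{-1}$. Your added remark on the invertibility of $I-\tau_i\Lambda$ is a welcome clarification that the paper leaves implicit.
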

\begin{proof}
Recall from (\ref{4.5}) that the Ritz residuals can be rewritten as
\begin{eqnarray}\label{eq416}
{\bf r}_j&=&A^{-1}\mathcal{V}_m{\bf y}_j-\mu_j\mathcal{V}_m{\bf y}_j=V_{m+1}H_{m+1,m}(E_m^{\rm T}{\bf y}_i),\quad j=1,2,\ldots,mp.
\end{eqnarray}
Moreover, we have from (\ref{323}) that
\begin{eqnarray}\label{eq417}
\widehat{R}_i^{FOM}=\tau_iV_{m+1}H_{m+1,m}E_m^{\rm T}(I-\tau_i\mathcal{H}_{m})^{-1}E_1\widehat{R}_i.
\end{eqnarray}
If $\mathcal{H}_m=P\Lambda P^{-1}$ is diagonalizable, then
\begin{eqnarray}\label{eq418}
(I-\tau_i\mathcal{H}_{m})^{-1}E_1\widehat{R}_i&=&P(I-\tau_i\Lambda)^{-1}P^{-1}E_1\widehat{R}_i.
\end{eqnarray}
Thus, it follows from (\ref{eq416})--(\ref{eq418}) that
\begin{eqnarray*}
\widehat{R}_i^{FOM}&=&\tau_iV_{m+1}H_{m+1,m}E_m^{\rm T}P(I-\tau_i\Lambda)^{-1}P^{-1}E_1\widehat{R}_i\\
&=&\tau_iV_{m+1}H_{m+1,m}E_m^{\rm T}[{\bf y}_1,{\bf y}_2,\ldots,{\bf y}_{mp}](I-\tau_i\Lambda)^{-1}P^{-1}E_1\widehat{R}_i\\
&=&\tau_i\big[{\bf r}_1,{\bf r}_2,\ldots,{\bf r}_{mp}\big](I-\tau_i\Lambda)^{-1}P^{-1}E_1\widehat{R}_i,\quad i=1,2,\ldots,\nu.
\end{eqnarray*}
\end{proof}

Finally, we give an error analysis on the approximation (\ref{11}).
Let $X_i,\widetilde{X}_i$ be the ``exact" and the ``computed" solutions of the shifted linear systems (\ref{3}), respectively, and denote by
\begin{equation}
\widetilde{Z}=\omega_0B+\sum_{i=1}^\nu\widetilde{X}_i
\end{equation}
the numerical approximation to $e^AB$. Then we have from (\ref{11}) that
\begin{eqnarray}
\|e^AB-\widetilde{Z}\|&=&\|e^AB-\widetilde{f}^{\star}(A)+\widetilde{f}^{\star}(A)-\widetilde{Z}\|\nonumber\\
&\leq&\|e^AB-\widetilde{f}^{\star}(A)\|+\|\widetilde{f}^{\star}(A)-\widetilde{Z}\|\nonumber\\
&\leq&\|e^AB-\widetilde{f}^{\star}(A)\|+\sum_{i=1}^\nu|\omega_i|\cdot\|X_i-\widetilde{X}_i\|.
\end{eqnarray}
Thus, the error is dominated by the one from the Carath$\acute{\rm e}$odory-Fej$\acute{\rm e}$r approximation, and the one from the numerical solutions of the shifted linear systems. In this paper, we are interested in the latter.

Let $\widetilde{A}^{-1}$ be the ``computed" solution of the exact inverse $A^{-1}$. Then the preconditioned shifted linear systems (\ref{3188}) turn out to be
\begin{equation} \label{eq421}
(I-\tau_i\widetilde{A}^{-1}\big)\widetilde{Y}_{i}=B,\quad i=1,2,\ldots,\nu.
\end{equation}
The following theorem shows that the quality of the approximations
to the solutions of (\ref{3}) is bounded by the relative accuracy of $\widetilde{A}^{-1}$ and the condition number of $A$.
\begin{theorem}
Let $Y_i=(I-\tau_i{A}^{-1})^{-1}B,~\widetilde{Y}_i=(I-\tau_i\widetilde{A}^{-1})^{-1}B$, and ${X}_i={A}^{-1}{Y}_i,~\widetilde{X}_i=\widetilde{A}^{-1}\widetilde{Y}_i$ be the exact and the computed solutions of {\rm(}\ref{3}{\rm)}, respectively. Denote $F=\widetilde{A}^{-1}-A^{-1}$, if $\|\tau_i(I-\tau_i A^{-1})^{-1}F\|\ll 1$, then
\begin{equation}\label{eqn421}
\frac{\|X_i-\widetilde{X}_i\|}{\|X_i\|}\lessapprox\kappa(A)\|(I-\tau_iA^{-1})^{-1}\|\cdot\frac{\|\widetilde{A}^{-1}-A^{-1}\|}{\|A^{-1}\|},\quad i=1,2,\ldots,\nu.
\end{equation}
\end{theorem}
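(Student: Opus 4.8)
The plan is to run a standard first-order perturbation analysis, treating $\widetilde{A}^{-1}$ as a perturbation of the exact inverse $A^{-1}$; the only nonroutine point is an algebraic simplification that makes the shift $\tau_i$ cancel out of the final bound. First I would set $F=\widetilde{A}^{-1}-A^{-1}$ and $N=I-\tau_i A^{-1}$, noting that $N$ is nonsingular because $A-\tau_i I$ is nonsingular by the standing assumption and $A-\tau_i I = AN$. Then $I-\tau_i\widetilde{A}^{-1}=N-\tau_i F=N(I-\tau_i N^{-1}F)$, and under the hypothesis $\|\tau_i(I-\tau_i A^{-1})^{-1}F\|\ll 1$ the factor $I-\tau_i N^{-1}F$ is invertible with a convergent Neumann series, so that, using $Y_i=N^{-1}B$,
$$
\widetilde{Y}_i=(I-\tau_i N^{-1}F)^{-1}N^{-1}B=Y_i+\tau_i N^{-1}F\,Y_i+O\!\big(\|\tau_i N^{-1}F\|^2\big).
$$

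Next I would substitute this into $\widetilde{X}_i=\widetilde{A}^{-1}\widetilde{Y}_i=(A^{-1}+F)\widetilde{Y}_i$ and discard the terms quadratic in $F$, which gives $\widetilde{X}_i=X_i+(\tau_i A^{-1}N^{-1}+I)F\,Y_i+O(\|F\|^2)$ with $X_i=A^{-1}Y_i$. The key step is the identity $\tau_i A^{-1}=I-N$, from which $\tau_i A^{-1}N^{-1}+I=(I-N)N^{-1}+I=N^{-1}$; hence $X_i-\widetilde{X}_i\approx -N^{-1}F\,Y_i$, and since $Y_i=AX_i$ this is $X_i-\widetilde{X}_i\approx -N^{-1}FA\,X_i$. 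Taking norms, dividing by $\|X_i\|$, and rewriting $\|A\|\,\|N^{-1}\|\,\|F\|=\big(\|A\|\,\|A^{-1}\|\big)\|N^{-1}\|\,\|F\|/\|A^{-1}\|=\kappa(A)\,\|(I-\tau_i A^{-1})^{-1}\|\,\|F\|/\|A^{-1}\|$ yields (\ref{eqn421}).

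The main obstacle, and really the only place where care is needed, is controlling the discarded terms: one must verify that both the Neumann remainder and the $O(\|F\|^2)$ cross term arising in $(A^{-1}+F)\widetilde{Y}_i$ are of genuinely higher order than the retained contribution $N^{-1}F\,Y_i$. This is exactly what the smallness hypothesis $\|\tau_i(I-\tau_i A^{-1})^{-1}F\|\ll 1$ provides, and it is the reason the estimate is stated with $\lessapprox$ rather than a sharp inequality. Beyond that, the argument is pure bookkeeping: no commutativity is used except the trivial $(I-N)N^{-1}=N^{-1}-I$, and submultiplicativity of the induced matrix norm suffices for the final bound.
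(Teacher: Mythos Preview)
Your argument is correct and reaches the same bound, but the route differs in one notable respect from the paper's. The paper observes at the outset, by subtracting the two defining equations $(I-\tau_i A^{-1})Y_i=B$ and $(I-\tau_i\widetilde{A}^{-1})\widetilde{Y}_i=B$, the \emph{exact} identity
\[
\widetilde{Y}_i-Y_i=\tau_i\big(\widetilde{A}^{-1}\widetilde{Y}_i-A^{-1}Y_i\big)=\tau_i(\widetilde{X}_i-X_i),
\]
so that $\|X_i-\widetilde{X}_i\|=\|\widetilde{Y}_i-Y_i\|/|\tau_i|$ holds without any approximation. After this, only the Neumann expansion of $\widetilde{Y}_i-Y_i$ is needed, and there is no cross term $F\widetilde{Y}_i$ to track or discard. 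Your approach instead expands $\widetilde{X}_i=(A^{-1}+F)\widetilde{Y}_i$ directly and then collapses the first-order pieces via the identity $\tau_i A^{-1}N^{-1}+I=N^{-1}$; this identity is precisely the first-order shadow of the exact relation above. Both arguments are valid, but the paper's version is a bit cleaner: it introduces only one approximation (the Neumann truncation) rather than two, so the justification of the discarded terms is shorter. Your version, on the other hand, makes the cancellation of $\tau_i$ in the final bound more transparently algebraic.
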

\begin{proof}
We have from (\ref{3188}) and (\ref{eq421}) that
\begin{equation}\label{eq422}
\widetilde{Y}_i-Y_i=\tau_i(\widetilde{A}^{-1}\widetilde{Y}_i-A^{-1}Y_i)=\tau_i(\widetilde{X}_i-X_i).
\end{equation}
Let $F=\widetilde{A}^{-1}-A^{-1}$, then
\begin{eqnarray*}
\widetilde{Y}_i-Y_i&=&(I-\tau_i\widetilde{A}^{-1})^{-1}B-(I-\tau_i{A}^{-1})^{-1}B\nonumber\\
&=&\big[(I-\tau_i\big(I-\tau_i{A}^{-1})^{-1}F\big)^{-1}-I\big](I-\tau_i{A}^{-1})^{-1}B\nonumber\\
&=&\big[(I-\tau_i\big(I-\tau_i{A}^{-1}\big)^{-1}F)^{-1}-I\big]Y_i.
\end{eqnarray*}
If $\|\tau_i(I-\tau_i A^{-1})^{-1}F\|\ll 1$, then we have that
\begin{eqnarray*}
\|(I-\tau_i(I-\tau_i{A}^{-1})^{-1}F)^{-1}-I\|&=&\|\tau_i(I-\tau_i{A}^{-1})^{-1}F\|+\mathcal{O}\big(\|\tau_i(I-\tau_i{A}^{-1})^{-1}F\|^2\big),
\end{eqnarray*}
and
\begin{eqnarray}\label{eq423}
\frac{\|\widetilde{Y}_i-Y_i\|}{|\tau_i|}\lessapprox\|(I-\tau_i{A}^{-1})^{-1}F\|\cdot\|Y_i\|
\leq\|(I-\tau_i{A}^{-1})^{-1}F\|\cdot\|A\|~\|X_i\|,
\end{eqnarray}
where we omit the high order term $\mathcal{O}\big(\|\tau_i(I-\tau_i{A}^{-1})^{-1}F\|^2\big)$.
Combining (\ref{eq422}) and (\ref{eq423}), we arrive at
\begin{eqnarray*}
\frac{\|X_i-\widetilde{X}_i\|}{\|X_i\|}&\lessapprox&\|A\|~\|F\|\cdot\|(I-\tau_i{A}^{-1})^{-1}\|\\
&=&\|A\|~\|A^{-1}\|~\|(I-\tau_i{A}^{-1})^{-1}\|\cdot\frac{\|F\|}{\|A^{-1}\|},
\end{eqnarray*}
which completes the proof.
\end{proof}

\begin{rem}
We notice that
$
\|(I-\tau_i{A}^{-1})^{-1}\|_2=\frac{1}{\sigma_{\min}(I-\tau_i{A}^{-1})},
$
where $\|\cdot\|_2$ is the 2-norm. Therefore, if $\sigma_{\min}(I-\tau_i{A}^{-1})$ is sufficiently large {\rm(}say, $\sigma_{\min}(I-\tau_i{A}^{-1})\geq 10^{-4}${\rm)},
the upper bound of the relative errors
of $\{{X_i}\}_{i=1}^{\nu}$ will be
dominated by the condition number of $A$ and the relative error of $A^{-1}$.
Moreover, we point out that the assumption of $\|\tau_i(I-\tau_i A^{-1})^{-1}F\|_2\ll 1$ is not stringent if $\sigma_{\min}(I-\tau_i{A}^{-1})$ is not small.
Indeed,
a sufficient
condition is that
$
\|F\|_2\ll\frac{\sigma_{\min}(I-\tau_i{A}^{-1})}{|\tau_i|},~i=1,2,\ldots,\nu.
$
\end{rem}

\section{Numerical Experiments}
\setcounter{equation}{0}

In this section, we perform some numerical
experiments and show the superiority of Algorithm 2 over many state-of-the-art algorithms for computing (\ref{1}). All the numerical experiments are run
on a Dell Workstation with four core Intel(R) Pentium(R) processor with CPU
3.2 GHz and RAM 16 GB, under the Windows XP 64 bit operating system. All
the experimental results are obtained from using a MATLAB 7.7 implementation
with machine precision $\epsilon\approx 2.22\times 10^{-16}$.
The algorithms used in this section are listed as follows.\\
$\bullet$ {\bf expm} is the MATLAB built-in function for matrix exponential,
which implements the scaling and squaring method \cite{Higham2}.\\
$\bullet$ {\bf SBFOM-DR} and {\bf PSBFOM-DR} are the shifted block FOM algorithm with deflation (Algorithm 1) and the preconditioned and shifted block FOM algorithm with deflation (Algorithm 2), respectively.\\
$\bullet$ {\bf expv} is the
MATLAB function due to Sidje \cite{Sidje}, which evaluates $e^{A}{\bf b}$ using a restarted Krylov subspace method with
a fixed dimension. The MATLAB codes are available from {\it http://www.maths.uq.edu.au/expokit/}.\\
$\bullet$ The MATLAB
function {\bf phipm} of Niesen and Wright \cite{NW} computes the action of linear
combinations of $\phi$-functions on operand vectors. The implementation combines
time stepping with a procedure to adapt the Krylov subspace size. The MATLAB codes are available from {\it http://www1.maths.leeds.ac.uk/\~~jitse/software.html}.\\
$\bullet$ The MATLAB
function {\bf funm\_kryl} is a realization of the Krylov subspace method with deflated restarting for matrix functions \cite{EEG}. Its effect is to ultimately deflate a specific invariant subspace of the matrix which most impedes the convergence of the restarted Arnoldi approximation process. The MATLAB codes are available from {\it http://www.mathe.tu-freiberg.de/\~~guettels/funm\_kryl/}.\\
$\bullet$ The MATLAB function {\bf expmv} is developed for computing $e^AB$ \cite{AH}, where $A$ is an $n\times n$ matrix and
$B$ is $n\times p$ with $p\ll n$. It uses the scaling part of the scaling and squaring method together with a truncated
Taylor series approximation to the exponential. The MATLAB codes are available from {\it http://www.maths.manchester.ac.uk/\~~almohy/papers.html}.


We run all these MATLAB functions with
their default parameters, and the convergence tolerance in every algorithm is chosen as $tol=10^{-8}$.
In the tables below, we denote by ``CPU" the
CPU time in seconds, and by ``Mat-Vec" the number of matrix-vector products. Let $Z(t)=e^{tA}B$ be the ``exact" solution obtained from running the MATLAB build-in function \texttt{expm}, and let $\widetilde{Z}(t)$ be an approximation computed from other algorithms. Then we make use of
\begin{equation}
{\bf Error}=\frac{\|Z(t)-\widetilde{Z}(t)\|_F}{\|Z(t)\|_F}
\end{equation}
as the relative error of the approximation $\widetilde{Z}(t)$. If an algorithm does not converge within acceptable CPU time (say, 12 hours), or ``Error" is larger than $10^{-6}$, then we declare that the algorithm fails to converge. Efficiency of an algorithm is mainly measured
in terms of ``CPU" and ``Error".

In this section, we choose $\nu=14$ for the Carath\'{e}odory-Fej\'{e}r approximation of the exponential. The right-hand sides are generated by using the MATLAB function $B=\texttt{randn(n,p)}$, which is an $n$-by-$p$ matrix with random entries.
In SBFOM-DR and PSBFOM-DR, we fix the block Arnoldi steps to be $m=30$, and set $k$ to be 30, which is the number of approximate eigenvectors
retained from the previous cycle.
Except for Example 5.4, the matrix-vector product $A^{-1}{\bf w}$ is computed by two steps. First,
we use the MATLAB command $[L,U]=\texttt{lu(A)}$ to evaluate the (sparse) LU factors of the matrix $A$, and then compute
$A^{-1}{\bf w}= U\setminus (L\setminus{\bf w})$, where the backslash ``$\setminus$" is the MATLAB left matrix divide command.
The CPU time of PSBFOM-DR is made up of that for computing LU decomposition, solving the shifted linear systems with multiple right-hand sides, as well as that for forming the approximation $\widetilde{Z}(t)$.

{\bf Example 5.1.}~~This experiment is a variation of the one from
 Al-Mohy and Higham \cite{AH}. There are two test matrices in this example, which are generated by the MATLAB function ``\texttt{gallery}".
The first test matrix is the block tridiagonal matrix $A=-2500\times \texttt{gallery('poisson',99)}$, with $A\in\mathbb{R}^{9801\times 9801}$
being a multiple of the standard
finite difference discretization of the 2D Laplacian. We want to compute $e^{A}B$ with the right-hand sides $B=\texttt{randn(9801,3)}$. The second test matrix is generated by $A=\texttt{gallery('lesp',10000)}$. It returns a $10,000\times 10,000$ matrix with sensitive eigenvalues. We try to compute $e^{A}B$ with the right-hand sides $B=\texttt{randn(10000,5)}$. Tables 5.1 and 5.2 list the numerical results.

{\small
\begin{table}[!h]
\begin{center}
\def\temptablewidth{0.8\textwidth}
{\rule{\temptablewidth}{1pt}}
\begin{tabular*}{\temptablewidth}{@{\extracolsep{\fill}}lccc}
{\bf Algorithm}   &{\bf CPU}  &{\bf Mat-Vec} &{\bf Error}  \\\hline
expm       &565.0     &$-$      &$-$    \\
SBFOM-DR   &10.3      &810     &$4.50\times 10^{-9}$\\
{\bf PSBFOM-DR}   &3.17      &123     &$5.76\times 10^{-10}$ \\
expv    &5.98      &6,944     &$1.39\times 10^{-8}$\\
phipm      &6.42      &4,487     &$1.58\times 10^{-8}$\\
funm\_Kryl   &9.60      &900     &$6.83\times 10^{-7}$\\
expmv   &23.5      &28,553     &$7.39\times 10^{-7}$\\
 \end{tabular*}
 {\rule{\temptablewidth}{1pt}}\\
 \end{center}
 \begin{flushleft}
  {\small {\rm Example 5.1,~Table 5.1:~Numerical results of the algorithms for the computation of $e^{A}B$, where $A=-2500\times\texttt{gallery('poisson',99)}\in\mathbb{R}^{9801\times 9801}$, and $B=\texttt{randn(9801,3)}$.}}
 \end{flushleft}
 \end{table}
}



{\small
\begin{table}[!h]
\begin{center}
\def\temptablewidth{0.8\textwidth}
{\rule{\temptablewidth}{1pt}}
\begin{tabular*}{\temptablewidth}{@{\extracolsep{\fill}}lccccr}
{\bf Algorithm}   &{\bf CPU}  &{\bf Mat-Vec} &{\bf Error}  \\\hline
expm      &966.5     &$-$      &$-$    \\
SBFOM-DR   &1318.8     &5,419    &$1.47\times 10^{-8}$\\
{\bf PSBFOM-DR}   &97.4     &205     &$3.14\times 10^{-9}$ \\
expv   &8583.5    &38,347    &$5.57\times 10^{-11}$\\
phipm      &5240.4      &23,258     &$8.46\times 10^{-9}$\\
funm\_Kryl  
           &f.c. &f.c. &f.c.\\
expmv   &17572.3      &39,225     &$2.73\times 10^{-7}$\\
 \end{tabular*}
 {\rule{\temptablewidth}{1pt}}\\
 \end{center}
 \begin{flushleft}
  {\small {\rm Example 5.1,~Table 5.2:~Numerical results of the algorithms for the computation of $e^AB$, where $A=\texttt{gallery('lesp',10000)}\in\mathbb{R}^{10,000\times 10,000}$, and $B=\texttt{randn(10000,5)}$. Here ``f.c." denotes ``fails to converge".}}
 \end{flushleft}
 \end{table}
}

Some remarks are in order. First, we see that PSBFOM-DR can be applied to large and sparse matrix
exponential successfully. Second, the new algorithm is efficient for the computation of matrix exponential.
It is seen from Tables 5.1 and 5.2 that PSBFOM-DR outperforms the other six algorithms in terms of both the CPU time and
the number of matrix-vector products.
Specifically, PSBFOM-DR converges much faster than SBFOM-DR, 3.17 seconds vs. 10.3 seconds for the first test problem,
and 97.4 seconds vs 1318.8 seconds for the second one.
This illustrates that our preconditioning strategy is very effective for the shifted linear systems with multiple right-hand sides.
Third, the number of matrix-vector products is not the whole story for computing the matrix exponential problem. For example,
we notice from Table 5.1 that
PSBFOM-DR used 123 matrix-vector products and 3.17 seconds, while expv used 6944 matrix-vector products and 5.98 seconds. The reason is that
the CPU time of PSBFOM-DR includes that for the LU decomposition and for solving the shifted linear systems.
Moreover, in the first test problem, phipm uses 4487 matrix-vector products and 6.42 seconds, while expv exploits 6944 matrix-vector products and 5.98 seconds.
As a result, an algorithm using fewer matrix-vector products may not converge faster than another using more matrix-vector products, and vice versa.

{\bf Example 5.2.}~~In this example, we try to show that the preconditioned and shifted block FOM algorithm (PSBFOM-DR) is favorable to exponential of $tA$
with a large norm. The test matrices are two symmetric positive matrix (SPD) matrices that are available from the University of Florida Sparse Matrix Collection: {\it http://www.cise.ufl.edu/ research/sparse/matrices}.
The first test matrix is the {\it 1138bus} matrix arising from power system networks. It is of size $1138\times 1138$, with 2596 nonzero elements.
The second one is the {\it Pres\_Poisson} matrix arising from computational fluid dynamics problems.
The size of this matrix is $14,822\times 14,822$, with 715,804 nonzero elements. We want to compute $e^{t A}B$, with $t=-1,-10,-100$ for the {\it 1138bus} matrix,
and $t=-100,-1000,-10000$ for the {\it Pres\_Poisson} matrix. Tables 5.3 and 5.4 report the numerical results.

 {\small
\begin{table}[!h]
\begin{center}
\def\temptablewidth{0.8\textwidth}
{\rule{\temptablewidth}{1pt}}
\begin{tabular*}{\temptablewidth}{@{\extracolsep{\fill}}lcccc}
{\bf Algorithm}  &$t$ &{\bf CPU}  &{\bf Mat-Vec} &{\bf Error}  \\\hline
expm    &$-1$  &1.31     &$-$      &$-$    \\
SBFOM-DR &$-1$ &5.28     &2,190     &$3.17\times 10^{-12}$\\
{\bf PSBFOM-DR} &$-1$  &1.95     &524     &$4.46\times 10^{-12}$ \\
expv &$-1$  &2.30     &12,214     &$9.51\times 10^{-13}$\\
phipm   &$-1$   &1.59      &7,260     &$3.00\times 10^{-11}$\\
funm\_Kryl  &$-1$ &198.8    &2,370     &$1.05\times 10^{-9}$\\
expmv &$-1$  &9.08     &70,616     &$1.86\times 10^{-10}$\\\hline
expm    &$-10$  &1.45     &$-$      &$-$    \\
SBFOM-DR &$-10$  &180.6     &74,190     &$7.55\times 10^{-12}$\\
{\bf PSBFOM-DR} &$-10$  &0.45     &164     &$4.07\times 10^{-12}$ \\
expv &$-10$  &13.3   &85,002     &$2.93\times 10^{-12}$\\
phipm   &$-10$  &9.67     &38,261    &$2.45\times 10^{-11}$ \\
funm\_Kryl  &$-10$ &f.c.     &f.c.     &f.c.\\
expmv &$-10$  &87.2    &704,275     &$8.08\times 10^{-11}$\\\hline
expm    &$-100$  &1.59     &$-$      &$-$    \\
SBFOM-DR &$-100$  &39.2     &16,050     &$5.38\times 10^{-10}$\\
{\bf PSBFOM-DR} &$-100$  &0.47     &164     &$1.94\times 10^{-11}$ \\
expv &$-100$  &90.5     &565,378    &$1.76\times 10^{-11}$\\
phipm   &$-100$   &44.8      &126,191     &$4.27\times 10^{-11}$\\
funm\_Kryl  &$-100$ &f.c.     &f.c.     &f.c.\\
expmv &$-100$  &1109.3      &7,040,894     &$1.47\times 10^{-11}$\\
 \end{tabular*}
 {\rule{\temptablewidth}{1pt}}
 \end{center}
 \begin{flushleft}
  {\small {\rm Example 5.2,~Table 5.3:~Numerical results of the algorithms on $e^{tA}B$, where $A$ is the $1138\times 1138$ {\it 1138bus} matrix, $t=-1,-10,-100$ and $B=\texttt{randn(1138,4)}$. Here ``f.c." denotes ``fails to converge".}}
 \end{flushleft}
 \end{table}
}

Again, the numerical results illustrate that PSBFOM-DR is superior to the state-of-the-art algorithms for the matrix exponential computation in most cases, especially when
$\|tA\|$ is large.
Moreover,
one observes that the larger $\|tA\|$ is, the less the CPU time is required for PSBFOM-DR. The reason is that
$\frac{\|(tA-\tau_{i}I)-tA\|}{\|tA-\tau_{i}I\|}$ decreases as $t$ increases, refer to (\ref{43}). That is, the larger $\|tA\|$ (or $t$) is,
the better the preconditioner will be.
As a comparison, the CPU time for the other algorithms
increase as $\|tA\|$ (or $t$) becomes large.
Therefore, PSBFOM-DR is preferable to the matrix exponential problem as the coefficient matrix has a large norm or when $t$ is large.

{\small
\begin{table}[!h]
\begin{center}
\def\temptablewidth{0.8\textwidth}
{\rule{\temptablewidth}{1pt}}
\begin{tabular*}{\temptablewidth}{@{\extracolsep{\fill}}lcccc}
{\bf Algorithm}  &$t$ &{\bf CPU}  &{\bf Mat-Vec} &{\bf Error}  \\\hline
expm    &$-100$  &8943.8     &$-$      &$-$    \\
SBFOM-DR &$-100$  &21.7     &630     &$4.21\times 10^{-10}$\\
{\bf PSBFOM-DR} &$-100$  &130.9     &1,503     &$4.73\times 10^{-12}$ \\
expv &$-100$  &11.2     &2,139     &$2.16\times 10^{-13}$\\
phipm   &$-100$   &7.42    &1,287     &$7.99\times 10^{-12}$\\
funm\_Kryl  &$-100$ &11.9   &720    &$1.65\times 10^{-12}$\\
expmv &$-100$  &42.2      &6,411     &$1.60\times 10^{-10}$\\\hline
expm    &$-1000$  &8210.8     &$-$      &$-$    \\
SBFOM-DR &$-1000$  &f.c.     &f.c.     &f.c.\\
{\bf PSBFOM-DR} &$-1000$  &44.5     &423     &$2.22\times 10^{-12}$ \\
expv &$-1000$  &60.4     &12,245     &$1.01\times 10^{-12}$\\
phipm   &$-1000$   &45.7      &6,746     &$1.80\times 10^{-11}$\\
funm\_Kryl  &$-1000$ &684.3     &2,700     &$9.51\times 10^{-10}$\\
expmv &$-1000$  &410.4      &61,585     &$1.80\times 10^{-10}$\\\hline
expm    &$-10,000$  &7517.6     &$-$      &$-$    \\
SBFOM-DR &$-10,000$  &712.3     &19,050     &$2.96\times 10^{-11}$\\
{\bf PSBFOM-DR} &$-10,000$  &23.7     &183     &$1.01\times 10^{-13}$ \\
expv &$-10,000$  &391.6     &80,786     &$8.33\times 10^{-12}$\\
phipm   &$-10,000$   &238.3      &33,712     &$5.73\times 10^{-11}$\\
funm\_Kryl  &$-10,000$ &f.c.     &f.c.     &f.c.\\
expmv &$-10,000$  &4095.0     &613,358     &$4.34\times 10^{-11}$\\
 \end{tabular*}
 {\rule{\temptablewidth}{1pt}}\\
 \end{center}
 \begin{flushleft}
  {\small {\rm Example 5.2,~Table 5.4:~~Numerical results of the algorithms on $e^{tA}B$, where $A$ is the $14,822\times 14,822$ {\it Pres\_Poisson} matrix, $t=-100,-1000,-10000$ and $B=\texttt{randn(14822,3)}$. Here ``f.c." denotes ``fails to converge".}}
 \end{flushleft}
 \end{table}
}

 \begin{center}
   \scalebox{0.55}{\includegraphics{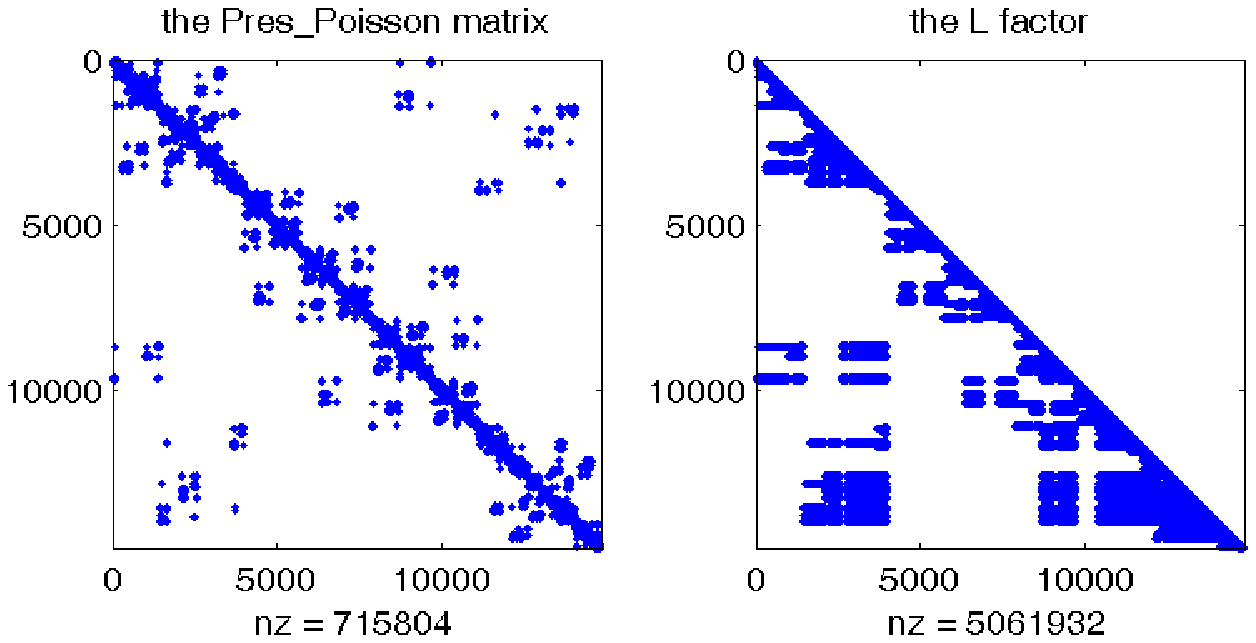}}
 \end{center}
 \begin{flushleft}
 {\small {\rm Example 5.2,~Figure 5.1:~~Sparse structure of the {\it Pres\_Poisson} matrix and that of the $L$ (and $U$)-factor.}}
 \end{flushleft}

Now let's briefly introduce the reason why our preconditioning strategy can be applied to large sparse matrix exponential computations successfully. In order
 to show this more precisely, we plot in Figure 5.1 the sparse structure of the {\it Pres\_Poisson} matrix and that of the $L$-factor (which is also the $U$-factor)
 obtained from the LU factorization of $A$, using the MATLAB command \texttt{spy}. It is observed that the $L$-factor (and the $U$-factor) is still sparse, moreover, the expense for the LU
 factorization is (relatively) much lower than that for computing $e^A$. This explains why the cost for the LU factorization of $A$ is not dominant in the PSBFOM-DR
 algorithm.

On the other hand, we notice that for the {\it Pres\_Poisson} matrix, when $t=-1000$, PSBFOM-DR uses 44.5 seconds and 423 matrix-vector products, while
 {phipm} uses 45.7 seconds and 6746 matrix vector products to achieve the accuracy of $\mathcal{O}(10^{-12})$. That is, it seems that the number of
 matrix-vector products
 used by phipm is about 15.9 times more than PSBFOM-DR, but the CPU time of these two algorithms is about the same; see also Table
 5.3 when $t=-1$. Indeed, as was mentioned in Example 5.1, the CPU time of PSBFOM-DR includes that for LU factorization of $A$.
 Furthermore, this can also be explained by using Figure 5.1, where it shows that the computational cost of performing a matrix-vector product in {PSBFOM-DR} is about 14 times more expensive than that in {phipm}.

{\bf Example 5.3.} This experiment tries to illustrate that PSBFOM-DR still works well even if $A$ is very ill-conditioned. As is known, the matrix exponential plays a fundamental role in solving linear differential equations. In this example, we consider the computation of the product of a matrix exponential with a vector, which arises from the numerical solution of the following fractional diffusion equation \cite{TMS}
\begin{eqnarray}\label{Ex_5.4}
&&\frac{\partial u(x,t)}{\partial t}=d(x)\frac{\partial^{\beta}u(x,t)}{\partial x^{\beta}}+q(x,t), \quad (x,t)\in(0,1)\times(0,1],\\
&& u(0,t)=0, \quad u(1,t)=e^{-t},\quad t\in(0,1],\nonumber\\
&& u(x,0)=x^3, \quad x\in(0,1), \nonumber
\end{eqnarray}
with the coefficient
$$
d(x)=\frac{\Gamma(4-\beta)}{6}x^{1+\beta}
$$
and the source term
$$
q(x,t)=-(1+x)e^{-t}x^3.
$$
Here $1<\beta<2$ and $\Gamma$ is the Gamma function. For the definition of the fractional order derivative, we refer to \cite{P}.

After the spatial discretization by the shifted Gr\"{u}nwald formula \cite{MT}, the equation (\ref{Ex_5.4}) reduces to a semidiscretized ordinary differential equations of the form
\begin{equation}\label{semi_FDE}
\frac{\mathrm{d}\mathbf{u}(t)}{\mathrm{d}t}=A\mathbf{u}(t)+\mathbf{b}(t),\quad \mathbf{u}_0=\mathbf{u}(0),
\end{equation}
where $A=\frac{1}{h^{\beta}}DG$ with $h$ being the grid size, $D$ is a diagonal matrix arising from the discretization of the diffusion coefficient $d(x)$, and $G$ is a lower Hessenberg Toeplitz matrix generated by the discretization of the fractional derivative; see \cite{WWS-2010} for the details of the discretization. The vector $\mathbf{b}(t)=e^{-t}\widetilde{\mathbf{b}}$, where $\widetilde{\mathbf{b}}$ consists of the discretization of $q(x,t)/e^{-t}$ and boundary conditions, and it is independent of $t$. By the variation-of-constants formula, the solution of (\ref{semi_FDE}) at time $t$ can be expressed as
\begin{eqnarray}\label{solution}
\mathbf{u}(t)&=&e^{tA}\mathbf{u}_0+\int_0^te^{(t-\tau)A}e^{-\tau}\mathbf{\widetilde{b}}\mathrm{d}\tau \nonumber\\ &=&e^{tA}(\mathbf{u}_0+(A+I)^{-1}\mathbf{\widetilde{b}})-e^{-t}(A+I)^{-1}\mathbf{\widetilde{b}},
\end{eqnarray}
provided that $A+I$ is invertible. In light of (\ref{solution}), to compute the solution $\mathbf{u}(t)$, we have to approximate the product of the matrix exponential $e^{tA}$ with the vector
$$
\widehat{{\bf b}}=\mathbf{u}_0+(A+I)^{-1}\mathbf{\widetilde{b}},
$$
which is the major computational cost for this problem.

In this experiment, we choose $\beta=1.7,~t=1$, and set the size of the matrix $A$ to be $n=1000,2000$ and 3000, respectively. We mention that the matrix $A$ is very ill-conditioned and it is not a Toeplitz matrix. Indeed, as $n=1000,2000$ and 3000, the 1-norm condition numbers \big(estimated by using the MATLAB command \texttt{condest}\big) are about $3.44\times 10^{10},3.64\times 10^{11}$ and $1.45\times 10^{12}$, respectively. We run the seven algorithms on this problem. Table 5.5 lists the numerical results.

{\small
\begin{table}[!h]
\begin{center}
\def\temptablewidth{0.8\textwidth}
{\rule{\temptablewidth}{1pt}}
\begin{tabular*}{\temptablewidth}{@{\extracolsep{\fill}}lccccr}
{\bf Algorithm} &$n$  &{\bf CPU}  &{\bf Mat-Vec} &{\bf Error}  \\\hline
expm   &1000   &9.61    &$-$      &$-$    \\
SBFOM-DR &1000  &f.c.      &f.c.     &f.c.\\
{\bf PSBFOM-DR} &1000  &1.48  &41     &$7.11\times 10^{-7}$ \\
expv  &1000 &15.0      &8,897     &$1.95\times 10^{-12}$\\
phipm   &1000   &10.2      &5,725     &$4.34\times 10^{-11}$\\
funm\_Kryl &1000 &f.c.      &f.c.     &f.c.\\
expmv  &1000 &241.8      &143,964     &$1.05\times 10^{-9}$\\\hline
expm   &2000   &78.3     &$-$      &$-$    \\
SBFOM-DR &2000  &f.c.     &f.c.     &f.c.\\
{\bf PSBFOM-DR} &2000  &7.66     &41     &$1.93\times 10^{-7}$ \\
expv &2000  &227,0      &25,172     &$1.10\times 10^{-11}$\\
phipm   &2000   &138.0   &14,802     &$1.10\times 10^{-11}$\\
funm\_Kryl &2000  &f.c.     &f.c.     &f.c.\\
expmv &2000  &4196.0  &472,787     &$1.61\times 10^{-9}$\\\hline
expm   &3000   &257.0   &$-$      &$-$    \\
SBFOM-DR &3000  &f.c.   &f.c.     &f.c.\\
{\bf PSBFOM-DR} &3000  &27.3     &41     &$2.47\times 10^{-8}$ \\
expv &3000  &1008.1   &50,437    &$1.84\times 10^{-11}$\\
phipm   &3000   &505.1  &24,458    &$1.79\times 10^{-11}$\\
funm\_Kryl &3000 &f.c.     &f.c.     &f.c.\\
expmv  &3000 &18749.3  &946,657 &$2.05\times 10^{-9}$\\
 \end{tabular*}
 {\rule{\temptablewidth}{1pt}}\\
 \end{center}
 \begin{flushleft}
  {\small {\rm Example 5.3,~Table 5.5, :~Numerical results of Example 5.3 for computing $e^A\widehat{\bf b}$ with $\beta=1.7$ and $n=1000,2000$ and 3000. The 1-norm condition numbers \big(estimated by using the MATLAB command \texttt{condest}\big) are about $3.44\times 10^{10},3.64\times 10^{11}$ and $1.45\times 10^{12}$, respectively. Here ``f.c." denotes ``fails to converge".}}
 \end{flushleft}
 \end{table}
}

We see from Table 5.5 that our new algorithm performs much better than the other algorithms in terms of CPU time and number of matrix-vector products, moreover, it can still reach an acceptable accuracy even if $A$ is very ill-conditioned. So
our new algorithm is promising even if the matrix in question is very ill-conditioned. However, it is seen that the accuracy of our approximation is lower than those obtained from the other algorithms. This can be interpreted by using Theorem 4.3, where it is shown
that the error of the computed solution is affected by  the ill-conditioning of the matrix $A$ in question.

{\bf Example 5.4.}~~In this example, we consider approximation of $e^{tA}B$ with $A$ being a Toeplitz matrix. Toeplitz matrices arise from numerous topics like signal and image processing,
numerical solutions of partial differential equations and integral equations, queueing
networks \cite{CNG,CJ}, and so on. The Toeplitz matrix exponential problem plays an important role in various application fields such as computational finance \cite{LPS,PS,TGB}. Moreover, in integral equations, the Toeplitz matrix exponential also takes part in the numerical
solution of Volterra-Wiener-Hopf equations \cite{AB}. However, Toeplitz matrices generally are dense, and some classic methods
for approximating the Toeplitz matrix exponential will suffer from $\mathcal{O}(n^3)$ complexities \cite{ML2}.

Based on the shift-and-invert Arnoldi method \cite{MN,vH}, Toeplitz structure and the famous Gohberg-Semencul
formula (GSF) \cite{GSF}, Lee, Pang and Sun \cite{LPS} proposed a shift-and-invert Arnoldi algorithm for Toeplitz matrix exponential, which can reduce the computational cost to $\mathcal{O}(n \log n)$ in total. However, there are some deficiencies in this algorithm. For instance, this algorithm is a non-restarted one and there is no {\it posteriori} stopping criterion available, so one does not know how to restart and when to terminate this algorithm properly. Furthermore, we have no idea of choosing the {\it optimal} shift in the shift-and-invert Arnoldi algorithm in advance, if there is no other information {\it a prior}.

The aim of the example is twofold. First, we compare our preconditioned and shifted block FOM algorithm (PSBFOM-DR) with the shift-and-invert Arnoldi method for Toeplitz matrix exponential \cite{LPS}, and show the superiority of the former over the latter. Second, we demonstrate that {PSBFOM-DR} is feasible for matrix exponential of very large matrices, provided that the inverse of the matrix $A$ can be computed efficiently. The test matrix $A$ comes from the spatial discretization of the following fractional diffusion equation by the shifted Gr\"{u}nwald formula \cite{MT}
\begin{eqnarray}\label{Ex_5.6}
&& \frac{\partial u(x,t)}{\partial
t}=d_1(x)\frac{\partial^{\beta}u(x,t)}{\partial_+
x^{\beta}}+d_2(x)\frac{\partial^{\beta}u(x,t)}{\partial_-
x^{\beta}}+q(x,t),~ x\in(0,1),~1<\beta<2,
\end{eqnarray}
where $d_1(x)=1$, $d_2(x)=3$, and $u(0,t)=u(1,t)=0$. We refer to \cite{P} for the definition of the fractional order derivatives. As the diffusion coefficients are constant, the resulting matrix after the spatial discretization is a Toeplitz matrix \cite{WWS-2010}.

 {\small
\begin{table}[!h]
\begin{center}
\def\temptablewidth{0.8\textwidth}
{\rule{\temptablewidth}{1pt}}
\begin{tabular*}{\temptablewidth}{@{\extracolsep{\fill}}lcccc}
{\bf Algorithm} &$n$  &{\bf CPU}  &{\bf Mat-Vec} &{\bf Error}  \\\hline
{\bf PSBFOM-DR} &$5\times 10^4$  &27.5  &82     &$6.74\times 10^{-8}$ \\
SI-Arnoldi  &$5\times 10^4$    &28.3    &120     &$5.09\times 10^{-8}$\\
SI-BArnoldi &$5\times 10^4$   &46.5     &120     &$4.45\times 10^{-8}$\\
expv        &$5\times 10^4$   &f.c.     &f.c,     &f.c.\\
phipm       &$5\times 10^4$   &f.c.     &f.c,     &f.c.\\
funm\_Kryl  &$5\times 10^4$   &f.c.     &f.c.     &f.c.\\
expmv       &$5\times 10^4$   &f.c.     &f.c,     &f.c.\\
\hline
{\bf PSBFOM-DR} &$1\times 10^5$  &77.6  &82     &$7.51\times 10^{-8}$ \\
SI-Arnoldi  &$1\times 10^5$    &89.2    &120     &$1.11\times 10^{-7}$\\
SI-BArnoldi &$1\times 10^5$   &119.8     &120     &$1.18\times 10^{-7}$\\
expv        &$1\times 10^5$   &f.c.     &f.c,     &f.c.\\
phipm       &$1\times 10^5$   &f.c.     &f.c,     &f.c.\\
funm\_Kryl  &$1\times 10^5$   &f.c.     &f.c.     &f.c.\\
expmv       &$1\times 10^5$   &f.c.     &f.c,     &f.c.\\
\hline
{\bf PSBFOM-DR} &$1.5\times 10^5$  &148.6  &82     &$5.49\times 10^{-7}$ \\
SI-Arnoldi  &$1.5\times 10^5$    &169.1    &120     &$2.40\times 10^{-7}$\\
SI-BArnoldi &$1.5\times10^5$   &227.6     &120     &$2.25\times 10^{-7}$\\
expv        &$1.5\times10^5$   &f.c.     &f.c,     &f.c.\\
phipm       &$1.5\times10^5$   &f.c.     &f.c,     &f.c.\\
funm\_Kryl  &$1.5\times10^5$   &f.c.     &f.c.     &f.c.\\
expmv       &$1.5\times10^5$   &f.c.     &f.c,     &f.c.\\
\hline
{\bf PSBFOM-DR} &$2\times 10^5$  &227.9  &82     &$1.75\times 10^{-7}$ \\
SI-Arnoldi  &$2\times 10^5$    &261.0    &120     &$4.10\times 10^{-8}$\\
SI-BArnoldi &$2\times 10^5$   &345.8     &120     &$3.71\times 10^{-8}$\\
expv        &$2\times 10^5$   &f.c.     &f.c,     &f.c.\\
phipm       &$2\times 10^5$   &f.c.     &f.c,     &f.c.\\
funm\_Kryl  &$2\times 10^5$   &f.c.     &f.c.     &f.c.\\
expmv       &$2\times 10^5$   &f.c.     &f.c,     &f.c.\\
 \end{tabular*}
 {\rule{\temptablewidth}{1pt}}\\
 \end{center}
 \begin{flushleft}
  {\small {\rm Example 5.4,~Table 5.6:~Numerical results of the seven algorithms for Toeplitz matrix exponential, $\beta=1.8,~B=\texttt{randn(n,2)}$.}}
 \end{flushleft}
 \end{table}
}

We run seven algorithms: PSBFOM-DR, expv, phipm, funm\_Kryl, expmv, the shift-and-invert Arnoldi algorithm ({SI-Arnoldi}) \cite{LPS}, and a shift-and-invert block Arnoldi algorithm ({SI-BArnoldi}, which is a generalization of {SI-Arnoldi} to the block case) on the matrices with size $n=5\times 10^4,10^5,1.5\times 10^5$ and $2\times 10^5$, respectively. Since $A$ is a Toeplitz matrix, we can use the Gohberg--Semencul formula \cite{GSF} for the inverse of Toeplitz matrix in PSBFOM-DR, SI-Arnoldi and SI-BArnoldi, which can be realized by using only six FFTs of length $n$ \cite{LPS,PS}. Note that one only needs to solve the linear systems only once for the Toeplitz inverse \cite{LPS,PS}. For the sake of justification, we replace the matrix-vector products in PSBFOM-DR, expv, phipm, funm\_Kryl, and expmv by the fast Toeplitz matrix-vector products \cite{CNG,CJ}.

In this example, we take $\beta=1.8$ and $t=1$ and compute $e^{tA}B$ with $B$ being generated by $\texttt{randn(n,2)}$. As was done in \cite{LPS}, we pick the shift $\gamma=1/10$ in the shift-and-invert Arnoldi algorithm and the shift-and-invert block Arnoldi algorithm. As the Toplitz matrices
are very large, the MATLAB build-in function \texttt{expm} is infeasible for this problem. As a compromise, we set the approximations got from running the shift-and-invert Arnoldi algorithm with the step $m=100$ to be the ``exact" solutions.

We use $m=30$ for the PSBFOM-DR algorithm. As {SI-Arnoldi} is a non-restarted and non-block algorithm, we use the step $m=60$ for {SI-Arnoldi}, so that the dimension of the search subspaces of PSBFOM-DR and {SI-Arnoldi} are the same. The number of the block shift-and-invert Arnoldi steps $m$ is also chosen as 60 in {SI-BArnoldi}. Table 5.6 lists the numerical results.
It is observed from this table that all the algorithms {expv}, phipm, {funm\_Kryl}, and expmv fail to converge for this example.
One reason is that the norm of the Toeplitz matrix $A$ can be very large when $n$ is large. For instance, when $n=5\times 10^4,10^5,1.5\times 10^5$ or $2\times 10^5$, the norm of $A$ is in the order of $\mathcal{O}(10^9)$ or even $\mathcal{O}(10^{10})$. It is shown that the accuracy and convergence of those methods are closely related to the norm of $A$ (refer to Example 5.2), and a large norm may lead to very slow convergence and even bad accuracy.
As a comparison, PSBFOM-DR works quite well for this problem. Moreover, it converges faster than {SI-Arnoldi} and {SI-BArnoldi} in many cases, and the accuracy of the solutions obtained from Algorithm 2 is comparable to those obtained from the two shift-and-invert algorithms. These demonstrate the potential of PSBFOM-DR for computing exponential of
very large Toeplitz matrices.

\section{Concluding remarks}

In this paper, we propose a preconditioned and shifted block FOM algorithm with deflation for computing the matrix exponential problem.
The key idea for the preconditioning technique is that the absolute values of the poles of
the Carath$\acute{\rm e}$odory-Fej$\acute{\rm e}$r approximation can be much smaller than the norm of the matrix in question.
The new method can precondition all the shifted linear systems simultaneously, and preserve the original
structure of the shifted linear systems when restarting. Furthermore, when both $A$ and $B$ are real while the shifts are complex, the expensive step of constructing the orthogonal basis can be realized in real arithmetics.

The new algorithm can also be applied to many other problems which reduces to solving shifted linear systems with multiple right-hand
sides. For instance, in the quantum chromodynamics (QCD) problems, it is common to solve linear systems with multiple shifts and multiple right-hand
sides \cite{BF,DMW,Vorst}.
In the Wilson-Dirac problems, the right-hand sides
represent different noise vectors, and the shifts $\{\tau_i\}_{i=1}^{\nu}$
correspond to different quark masses that are used in an
extrapolation \cite{DMW}. Consequently, the computation of the sign
function of a large matrix $A$ resorts to the problem of linear
system which have both multiple right-hand sides and multiple
complex shifts \cite{BF,STK}.
In future work, inexact and restarted algorithms would be studied for solving linear systems with $A$ in the inner iterations, where the matrix-vector products with respect to $A^{-1}$ can be relaxed as the residual of the outer iteration approaches zero \cite{BF1}.
It is a very interesting topic and deserves further investigation.

\section*{Acknowledgments}
The first author would like to thank Prof. Xiao-qing Jin, Prof. Hai-wei Sun and Dr. Siu-long Lei for their helpful discussions during his visiting of University of Macau.

{\small

}
\end{document}